\newtheorem{theorem}{Theorem}[]
\newtheorem{definition}[theorem]{Definition}
\newtheorem{example}[theorem]{Example}
\newtheorem{lemma}[theorem]{Lemma}
\newtheorem{proposition}[theorem]{Proposition}
\newtheorem{corollary}[theorem]{Corollary}
\newtheorem{observation}[theorem]{Observation}
\newtheorem{remark}[theorem]{Remark}
\begin{document}
\title{On \(\mathbb{F}_q\)-Order of Polynomials and Properties of \(r\)-Primitive and \(k\)-Normal Elements over Finite Fields} %%%%%%%%%%%%
\author{Maithri K., Vadiraja Bhatta G. R.$^*$, Indira K. P., Prasanna Poojary}
\date{}
\maketitle
\vspace{-1cm}
\begin{center}
    \small{Manipal Institute of Technology, MAHE, Manipal, Karnataka, India}
\end{center}

\begin{abstract}
Polynomials and elements over finite fields exhibit closely related algebraic structures, and many properties defined for elements extend naturally to polynomials. The concepts of order and $\mathbb{F}_q$-Order for elements have been extensively studied. In this paper, we investigate several properties of $r$-primitive and $k$-normal elements. Furthermore, by using the concept of the $\mathbb{F}_q$-Order of a polynomial, we explore properties of $k$-normal polynomials.
\end{abstract}

\bigskip

\section{Introduction}\label{sec1}

    ~~~Let \( q \) be a prime power and let \( n \) be a positive integer. Then \( \mathbb{F}_{q^n} \) denotes the finite field with \( q^n \) elements, which is an extension of \( \mathbb{F}_q \) of degree \( n \). This field exhibits two fundamental algebraic structures: it forms an $n$-dimensional vector space over $\mathbb{F}_q$, and its multiplicative group $\mathbb{F}_{q^n}^*$ is cyclic of order $q^n - 1$. These structures give rise to two distinguished classes of elements: normal elements and primitive elements.

    An element $\alpha \in \mathbb{F}_{q^n}$ is called a \textit{normal element} over $\mathbb{F}_q$ if the set of its conjugates $\{ \alpha, \alpha^q, \alpha^{q^2}, \dots, \alpha^{q^{n-1}} \}$ forms a basis of $\mathbb{F}_{q^n}$ as a vector space over $\mathbb{F}_q$. An element is said to be \textit{primitive} if it has multiplicative order $q^n - 1$, i.e., if it generates the entire multiplicative group $\mathbb{F}_{q^n}^*$.

    The study of primitive and normal elements began with the early works of Ore~\cite{OreFiniteFields} and Carlitz~\cite{Carlitz1952Prim, Carlitz1954Sum, CarlitzPrimitiveRootssomeproblem}, who looked at their main properties and set the foundation for later research in finite fields.
    The combination of these two properties in a single element, known as a \textit{primitive normal element}, is of particular interest due to its applications in cryptography and coding theory. Consequently, numerous studies have focused on the existence and construction of primitive normal elements~\cite{LenstraPriminormal, Cohen_1999, cohen2003primitive, SHARMA1}.

    The notions of primitive and normal elements have been extended to \textit{$r$-primitive} and \textit{$k$-normal} elements, which have been the focus of extensive recent research because of their importance in cryptography and coding theory.  A significant amount of work has focused on the existence of such elements and the investigation of their algebraic properties~\cite{Mullen2013Existance, Aguirre}. More specialized studies include results on the existence of primitive $1$-normal elements~\cite{Lucas2}. Subsequent research has examined the existence of \emph{pairs} of $r$-primitive and $k$-normal elements within finite field extensions~\cite{Aguirre2_pairs_rk}, as well as their characterization via inverse-type relations~\cite{RANI2_inverse}. Arithmetic progressions consisting of primitive, $r$-primitive, and $k$-normal elements have also been explored in several works \cite{Aguirre3_AP_rk, Laishram_AP}. In addition, various authors have considered these elements under additional constraints, such as arithmetic progressions with prescribed norms, $r$-primitive $k$-normal elements with specified norm and trace, and arithmetic progressions of $r$-primitive elements having a fixed norm \cite{chatterjee2024AP_norm, RANI1_norm&trace, ChoudharySpecial}.

    Primitive and normal elements play a central role in cryptography, and primitive polynomials are widely used in generating pseudorandom numbers through feedback shift registers (FSRs). This illustrates the importance of studying both the nature of elements and their polynomials, particularly primitive and normal polynomials.  A primitive polynomial is defined in terms of the order of its root, and such a polynomial has order $q^n-1$, as discussed in~\cite{LIDL}. The $k$-normal polynomials, introduced by M. Alizadeh et al.~\cite{knpoly}, are described through their roots. It's natural to be curious whether the notion of $\mathbb{F}_q$-Order can be introduced for polynomials and to define $k$-normal polynomials. This concept was briefly mentioned in Ore’s work~\cite{OreFiniteFields}. In this paper, we provide a precise definition of the $\mathbb{F}_q$-Order of a polynomial, in analogy with the case of elements, and use it to study structural properties and obtain new results pertaining $k$-normal polynomials.

    The paper is structured as follows. Section~\ref{sec2} collects the required preliminaries and foundational results used throughout the paper. In Section~\ref{sec4}, we define the notion of the $\mathbb{F}_q$-Order of a polynomial and examine its main properties and structural features. Section~\ref{sec3} is devoted to the study of $r$-primitive and $k$-normal elements over finite fields and analyzes their algebraic properties.

\section{Preliminaries}\label{sec2}
    This section summarizes the basic notions from finite field theory required for our study. They establish the foundational framework and notation used throughout the paper.

    \subsection{Multiplicative order and $r$-primitive elements}

    For $\alpha \in \mathbb{F}_{q^n}^*$, the smallest positive integer $d$ such that $\alpha^d = 1$ is called the \emph{multiplicative order} of $\alpha$, written $ord(\alpha)$. An element with order $q^n-1$ is called \emph{primitive}. More generally, an element is called \emph{$r$-primitive} if     $ord(\alpha) = \frac{q^n-1}{r},$
    where $r$ divides $q^n-1$~\cite{Reis2019Variation}. The case $r=1$ gives primitive elements.

    \subsection{$\mathbb{F}_q$-Order and $k$-normal elements}

    Let $\sigma_q(\beta) = \beta^q$ denote the Frobenius automorphism of $\mathbb{F}_{q^n}/\mathbb{F}_q$. For $\alpha \in \mathbb{F}_{q^n}$, the \emph{$\mathbb{F}_q$-Order} of $\alpha$, denoted $Ord(\alpha)$, is the unique monic polynomial $g(x) \in \mathbb{F}_q[x]$ of least degree such that $g(\sigma_q)(\alpha) = 0$. Equivalently, it is the minimal polynomial over $\mathbb{F}_q$ that annihilates $\alpha$ under the Frobenius action. It always divides $x^n-1$.

    An element $\alpha \in \mathbb{F}_{q^n}$ is called \emph{normal} over $\mathbb{F}_q$ if the set $\{ \alpha, \alpha^q, \dots, \alpha^{q^{n-1}} \}$ forms a basis of $\mathbb{F}_{q^n}$ over $\mathbb{F}_q$. Mullen et al.~\cite{Mullen2013Existance} introduced the concept of \emph{$k$-normal elements} which is defined by the condition
    $\deg(Ord(\alpha)) = n-k,$ 
    which is equivalent to
    \[\dim_{\mathbb{F}_q}\langle \alpha, \alpha^q, \dots, \alpha^{q^{n-1}} \rangle = n-k.\]
    The case $k=0$ corresponds to normal elements.

    \subsection{$m$-free and $g$-free elements}

    Additionally, we look into $r$-primitive and $k$-normal elements using the additional characterization of finite field elements known as freeness. 
    
    \begin{definition}
        \begin{enumerate}
          \item For $m \mid q^n-1, \alpha \in \mathbb{F}_{a^n}^*$ is $m$-free if $\alpha=\beta^d$, for any divisor $d$ of $m$, implies $d=1$.
          \item For $M \mid x^n-1, \alpha \in \mathbb{F}_{q^n}$ is $M$-free if $\alpha=H(\beta)$, where $H$ is the $q$-associate of a divisor $h$ of $M$, implies $h=1$\cite{Mullen2013Existance}.
      \end{enumerate}
    \end{definition}

    \medskip
    \section{$\mathbb{F}_q$-Order of a Polynomial}\label{sec4} 
    For elements of finite fields, the multiplicative order and the
$\mathbb{F}_q$-Order display closely related behavior; in particular,
both notions are governed by analogous structural principles. A similar
phenomenon occurs for polynomials over finite fields. The concept of the
order of a polynomial over $\mathbb{F}_{q^n}$ is well established: the
order of a polynomial $f(x)$ is defined as the least positive integer
$e$ such that
\[
f(x) \mid (x^e - 1).
\]

Motivated by this analogy, we introduce the notion of the
$\mathbb{F}_q$-Order of a polynomial and investigate its fundamental
properties. We show that, for polynomials over finite fields, the notions
of order and $\mathbb{F}_q$-Order exhibit parallel structural behavior,
closely mirroring the relationship observed in the case of field
elements.

The origin of this idea can be traced back to the work of Ore~\cite{OreFiniteFields},
who introduced polynomials belonging to the class of $p$-polynomials.
However, a systematic and detailed study of this concept has remained
limited. In this work, we develop a more formal treatment within the
modern framework of finite fields, interpreting this notion as the
$\mathbb{F}_q$-Order of a polynomial, with a suitable refinement of the
classical definition. 
\begin{definition}
    Let $f \in \mathbb{F}_q[x]$ be a polynomial of degree $m \geq 1$. Then $g$ is said to be $\mathbb{F}_q$-Order of $f$ if $g$ is the monic least degree polynomial such that $f \vert g\circ x$. We  denote it by $Ord(f)$.
\end{definition}

\begin{example}
        Consider the field $\mathbb{F}_{5^2}$. Let $f=x+1$. Then $\mathbb{F}_q$-Order of $f$ is $x-1$, since it is the least degree polynomial such that $f$ divides $x^5 - x$.
\end{example}

When considering an irreducible polynomial, the following result shows the relationship between the polynomial's $\mathbb{F}_q$-Order and its root. They will actually have equal $\mathbb{F}_q$-Orders.
    \begin{proposition}\label{fequala}
        If $f \in \mathbb{F}_q[x] $ is irreducible polynomial over $\mathbb{F}_q$ of degree $m$, then $Ord(f)=Ord(\alpha)$, where $\alpha $ is the root of $f$.
    \end{proposition}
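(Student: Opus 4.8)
The plan is to translate the divisibility condition that defines $Ord(f)$ into the Frobenius-annihilation condition that defines $Ord(\alpha)$, and to observe that the two conditions pick out exactly the same admissible polynomials. First I would make the notation $g \circ x$ explicit: writing $g(x) = \sum_{i} c_i x^{i}$, the polynomial $g \circ x$ is its linearized $q$-associate $G(x) = \sum_{i} c_i x^{q^{i}}$ (the reading confirmed by the preceding example, where $g = x-1$ gives $g \circ x = x^{5}-x$ over $\mathbb{F}_5$). The key identity is that evaluating this linearized polynomial at $\alpha$ reproduces the Frobenius action: since $\sigma_q^{i}(\alpha) = \alpha^{q^{i}}$,
\[
G(\alpha) = \sum_{i} c_i \alpha^{q^{i}} = \sum_{i} c_i \sigma_q^{i}(\alpha) = g(\sigma_q)(\alpha),
\]
so that $(g \circ x)(\alpha) = 0$ if and only if $g(\sigma_q)(\alpha) = 0$.

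Next I would invoke the minimal-polynomial property. Because $f$ is irreducible with root $\alpha$, it is a scalar multiple of the minimal polynomial of $\alpha$ over $\mathbb{F}_q$; hence for every $P \in \mathbb{F}_q[x]$ one has $f \mid P$ precisely when $P(\alpha) = 0$. Applying this with $P = g \circ x = G$ and combining with the identity above yields the chain
\[
f \mid (g \circ x) \iff G(\alpha) = 0 \iff g(\sigma_q)(\alpha) = 0 .
\]
Thus a monic $g$ satisfies the condition in the definition of $Ord(f)$ exactly when it satisfies the condition in the definition of $Ord(\alpha)$, so both definitions minimize the ordinary degree of $g$ over the same nonempty set of admissible polynomials. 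Nonemptiness is clear, since $g = x^{m}-1$ works: $\alpha \in \mathbb{F}_{q^{m}}$ forces $\alpha^{q^{m}} = \alpha$, and $f \mid x^{q^{m}} - x$. Since the two quantities are the least-degree monic member of one and the same set, they coincide, giving $Ord(f) = Ord(\alpha)$.

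I do not expect a genuine obstacle here, as the argument is essentially a change of language; the one step that carries the weight is the equivalence $f \mid P \iff P(\alpha) = 0$, and this holds for all $P \in \mathbb{F}_q[x]$ only because $f$ is irreducible, hence a scalar multiple of the minimal polynomial of $\alpha$. I would therefore emphasize that irreducibility is used in an essential way: for reducible $f$ the implication $P(\alpha) = 0 \Rightarrow f \mid P$ can fail, so the hypothesis is not cosmetic. Note also that no subtlety arises in matching degrees, since both $Ord(f)$ and $Ord(\alpha)$ are phrased in terms of the ordinary degree of the same polynomial $g$; the remaining points—identifying $g \circ x$ with $\sum_{i} c_i x^{q^{i}}$ and verifying nonemptiness—are routine.
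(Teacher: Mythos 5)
Your proof is correct and takes essentially the same approach as the paper's: both rest on the equivalence $f \mid (g\circ x) \iff (g\circ x)(\alpha)=0$ for $g \in \mathbb{F}_q[x]$, and then observe that $Ord(f)$ and $Ord(\alpha)$ minimize degree over one and the same set of monic polynomials. The only cosmetic difference is in justifying the direction $(g\circ x)(\alpha)=0 \Rightarrow f \mid (g\circ x)$: the paper passes to the Frobenius conjugates $\alpha^{q^i}$ and uses that they exhaust the roots of $f$, whereas you invoke the minimal-polynomial divisibility property of the irreducible $f$ directly --- two standard phrasings of the same fact.
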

    \begin{proof}
        Let $Ord(f) = g$, then by definition $f \vert g \circ x$, which implies $g \circ x =f(x).h(x)$, for some polynomial $h(x)$. Since $\alpha$ is the root of $f$, $g \circ \alpha = 0$.\\
        Let $k$ be a polynomial in $\mathbb{F}_q[x]$, with degree less than degree of $g$ and $h \circ \alpha =0$. Which implies that $h \circ \alpha ^ {q^i}=0, \forall~i =1, 2,\dots,m$, from which we get $f \vert h \circ x$, a contradiction. Thus $\mathbb{F}_q$-orders of $\alpha=g$.
    \end{proof}
    \begin{corollary}\label{fdivx}
         If $f \in \mathbb{F}_q[x] $ is irreducible polynomial over $\mathbb{F}_q$ of degree $m$, then $Ord(f) \vert x^m -1$.
    \end{corollary}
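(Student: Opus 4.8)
The plan is to derive the corollary directly from Proposition~\ref{fequala} together with the divisibility property of the $\mathbb{F}_q$-Order of a field element recorded in Section~\ref{sec2}. By Proposition~\ref{fequala}, if $\alpha$ denotes a root of $f$, then $Ord(f)=Ord(\alpha)$; hence it suffices to prove that $Ord(\alpha)\mid x^m-1$.

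To do this I would first pin down the field containing $\alpha$. Since $f$ is irreducible over $\mathbb{F}_q$ of degree $m$, it is the minimal polynomial of $\alpha$, so $[\mathbb{F}_q(\alpha):\mathbb{F}_q]=m$ and thus $\alpha\in\mathbb{F}_{q^m}$. Iterating the Frobenius automorphism then gives $\sigma_q^{\,m}(\alpha)=\alpha^{q^m}=\alpha$, so that $(x^m-1)(\sigma_q)(\alpha)=0$; that is, $x^m-1$ annihilates $\alpha$ under the Frobenius action. Because $Ord(\alpha)$ is by definition the least-degree monic annihilator, the division algorithm forces $Ord(\alpha)$ to divide every such annihilating polynomial, and in particular $Ord(\alpha)\mid x^m-1$ --- which is precisely the property already noted in Section~\ref{sec2}. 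Combining this with $Ord(f)=Ord(\alpha)$ yields $Ord(f)\mid x^m-1$.

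I do not expect a genuine obstacle here: the corollary is essentially a translation, through Proposition~\ref{fequala}, of the element-level fact that $Ord(\alpha)\mid x^m-1$. The only point requiring a moment's care is verifying that the extension degree equals exactly $m$, so that the annihilating exponent is $m$ rather than some multiple of it; this follows immediately from the irreducibility of $f$, which guarantees that $f$ is the minimal polynomial of $\alpha$ and hence that $\alpha\in\mathbb{F}_{q^m}$.
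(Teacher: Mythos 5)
Your proof is correct and matches the argument the paper intends: the corollary is stated without proof precisely because it follows immediately from Proposition~\ref{fequala} together with the fact (recorded in Section~\ref{sec2}) that the $\mathbb{F}_q$-Order of an element of $\mathbb{F}_{q^m}$ divides $x^m-1$. Your write-up simply makes explicit the two steps the paper leaves implicit --- that $\alpha\in\mathbb{F}_{q^m}$ by irreducibility of $f$, and that $x^m-1$ annihilates $\alpha$ under Frobenius, forcing $Ord(\alpha)\mid x^m-1$.
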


    \begin{proposition}\label{1}
        Let $f \in \mathbb{F}_q[x] $ be a polynomial such that $f \vert g\circ x$ for some $g \in \mathbb{F}_q[x]$ if and only if $Ord(f) \vert g$.
    \end{proposition}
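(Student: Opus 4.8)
The plan is to mirror the classical argument that the multiplicative order of a field element divides every exponent annihilating it, but to run it inside the ring of $q$-associates instead of the exponent lattice $\mathbb{Z}$. The engine I would invoke is the standard correspondence $g \mapsto g \circ x$ between $\mathbb{F}_q[x]$ under ordinary multiplication and the algebra of linearized ($q$-)polynomials under composition: it is additive and converts an ordinary product into a composition, $(g_1 g_2)\circ x = (g_1 \circ x)\circ(g_2 \circ x)$, with composition of $\mathbb{F}_q$-coefficient linearized polynomials being commutative. Before the two implications I would record this correspondence together with the elementary observation that $f \mid A(x)$ forces $f \mid A(x)^{q^i}$ for every $i$, and hence $f \mid H(A(x))$ for every linearized polynomial $H$.

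For the backward implication I would assume $Ord(f) \mid g$, write $g = Ord(f)\cdot h$, and push this through the correspondence to get $g \circ x = (Ord(f)\circ x)\circ(h\circ x)$. Using commutativity of composition I would rewrite this as $H\bigl(A(x)\bigr)$ with $A = Ord(f)\circ x$ and $H = h\circ x$; since $f \mid A$ by definition of $Ord(f)$, the auxiliary observation immediately yields $f \mid g\circ x$.

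For the forward implication I would assume $f \mid g\circ x$ and apply ordinary Euclidean division of $g$ by $Ord(f)$ in $\mathbb{F}_q[x]$, writing $g = Ord(f)\cdot Q + R$ with $\deg R < \deg Ord(f)$. Additivity of the correspondence splits $g\circ x$ as $(Ord(f)\cdot Q)\circ x + R\circ x$, whose first term is divisible by $f$ by the computation from the backward direction; combined with the hypothesis this gives $f \mid R\circ x$. The minimality in the definition of $Ord(f)$ then forces $R=0$ (after normalizing $R$ to be monic, which preserves divisibility of its $q$-associate by $f$), so that $Ord(f)\mid g$.

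The step I expect to be most delicate is the backward implication, specifically getting the order of composition right: divisibility by $f$ is preserved precisely when the $f$-divisible polynomial $A(x)$ sits on the inside of the composition, whereas the opposite nesting $A(H(x))$ gives no such guarantee. This is exactly the point where the commutativity of composition for $\mathbb{F}_q$-coefficient linearized polynomials does the real work, and it is the one place where the algebraic structure of $q$-associates---rather than formal symbol pushing---is genuinely needed. Everything else (additivity, the division algorithm, and the monic normalization in the minimality argument) should be routine.
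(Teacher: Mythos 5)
Your proposal is correct and follows essentially the same route as the paper's proof: the backward direction via the $q$-associate correspondence (ordinary divisibility transfers to divisibility of linearized associates), and the forward direction via Euclidean division of $g$ by $Ord(f)$ followed by the minimality argument forcing the remainder to vanish. If anything, your treatment is more careful than the paper's, which asserts $f \mid r\circ x \implies r\circ x = 0$ rather than invoking minimality (with the monic normalization) as you do.
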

    \begin{proof}
        Let $h= Ord(f) \vert g$, then $f \vert h \circ x$. Since $h\circ x \vert g \circ x$, we get $f \vert g \circ x$.\\
        Conversly, let $f \vert g \circ x$, then $deg(g) \geq deg(h)$. Thus $g = hk + r$, for some $ k, r \in \mathbb{F}_q[x]$ with $0 \leq deg(r) < deg(h)$. Now, $g \circ x = (hk) \circ x + r \circ x$. Also, $f \vert g \circ x$ and $f \vert (hk) \circ x$. Thus $f \vert r \circ x$, which implies $r \circ x = 0$, which implies $r=0$. Thus $h \vert g$.
    \end{proof}
    \begin{corollary}
        If $h, g$ are two non-constant polynomials. Then $gcd(h\circ x, g \circ x) = k \circ x$, where $k = gcd(h, g)$.
    \end{corollary}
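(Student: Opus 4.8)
The plan is to exploit the structural properties of the map $p \mapsto p \circ x$, which sends $p = \sum_i p_i x^i \in \mathbb{F}_q[x]$ to its $q$-associate $p \circ x = \sum_i p_i x^{q^i}$. Extending the notation to an arbitrary argument by $p \circ L = \sum_i p_i L^{q^i}$, this map is additive, $(p+q)\circ x = p\circ x + q\circ x$, and turns ordinary multiplication into composition, $(pq)\circ x = p \circ (q\circ x)$. I would first record this multiplicative identity carefully, because its validity rests on the coefficients $p_i,q_j$ lying in $\mathbb{F}_q$ and hence being fixed by every power of the Frobenius, i.e. $q_j^{q^i}=q_j$; this is precisely what makes $\big(\sum_j q_j x^{q^j}\big)^{q^i}=\sum_j q_j x^{q^{i+j}}$ and forces the two sides to agree.

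Writing $d=\gcd(h\circ x,\, g\circ x)$ and $k=\gcd(h,g)$ (all gcds taken monic), I would establish the two divisibilities $k\circ x \mid d$ and $d \mid k\circ x$, and then compare leading coefficients to upgrade these to equality; note $k\circ x$ is monic because $k$ is, and two mutually dividing monic polynomials coincide. For the first divisibility, since $k \mid h$ I write $h=mk$ and apply the multiplicative identity to get $h\circ x = m\circ(k\circ x)=\sum_j m_j (k\circ x)^{q^j}$, every summand of which is visibly divisible by $k\circ x$; hence $k\circ x \mid h\circ x$, and symmetrically $k\circ x \mid g\circ x$, so $k\circ x$ is a common divisor and $k\circ x \mid d$.

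For the reverse divisibility I would invoke Bézout in the principal ideal domain $\mathbb{F}_q[x]$: there exist $a,b\in\mathbb{F}_q[x]$ with $ah+bg=k$. Applying the map together with its additivity and multiplicative identity yields $k\circ x = a\circ(h\circ x) + b\circ(g\circ x)$. Since $d \mid h\circ x$, each term of $a\circ(h\circ x)=\sum_i a_i (h\circ x)^{q^i}$ is a power of a multiple of $d$ and so is divisible by $d$; likewise $d \mid b\circ(g\circ x)$. Therefore $d$ divides the sum $k\circ x$, which completes the argument.

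The routine parts are additivity and the monic normalization; the step I expect to demand the most care is the multiplicative-to-compositional identity $(pq)\circ x = p\circ(q\circ x)$ together with the observation that \emph{applying} $a\circ(-)$ preserves divisibility by $d$, since each output is a sum of Frobenius-power images of a multiple of $d$. This is the crux from which both inclusions flow. I remark that the inclusion $k\circ x \mid d$ could alternatively be extracted from Proposition~\ref{1}, but the Bézout route settles both directions uniformly.
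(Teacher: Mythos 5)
Your proof is correct. The paper itself supplies no written proof: the statement is presented as an immediate corollary of Proposition~\ref{1}, and the intended derivation runs through the $\mathbb{F}_q$-Order. Namely, with $d=\gcd(h\circ x,\,g\circ x)$, Proposition~\ref{1} applied to $d\mid h\circ x$ and $d\mid g\circ x$ gives $Ord(d)\mid h$ and $Ord(d)\mid g$, hence $Ord(d)\mid k$; combined with $d\mid Ord(d)\circ x$ this yields $d\mid k\circ x$, while the reverse divisibility $k\circ x\mid h\circ x,\ g\circ x$ follows from the compositional identity, and monicity forces equality. Your route replaces the appeal to $\mathbb{F}_q$-Orders by the B\'ezout identity $ah+bg=k$ pushed through the $q$-associate map, so the only ingredient shared with the paper is the identity $(pq)\circ x=p\circ(q\circ x)$, which you correctly ground in the coefficients being fixed by the Frobenius. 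What your approach buys: it is self-contained (it needs neither Proposition~\ref{1} nor the notion of $Ord$ at all), it makes explicit where membership of the coefficients in $\mathbb{F}_q$ is used, it exhibits $k\circ x$ concretely as $a\circ(h\circ x)+b\circ(g\circ x)$, and it extends verbatim to gcds of several polynomials. What the paper's route buys: the result becomes a one-line consequence of machinery already established, which is exactly why it is labelled a corollary there. Both arguments are sound; yours is essentially the classical Lidl--Niederreiter proof of the corresponding fact about linearized polynomials and is arguably the more informative of the two.
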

    
    Now we consider irreducible polynomials of various type and see what happens to their $\mathbb{F}_q$-Order.
    \begin{proposition}\label{7}
        Let $g \in \mathbb{F}_q[x]$ be irreducible over $\mathbb{F}_q$ and $f=g^b$ for some positive integer $b$. Let $t$ be the smallest positive integer with $p^t \geq b$, where p is the characteristic of $\mathbb{F}_q$. Then $Ord(f) = Ord(g)x^t$.
    \end{proposition}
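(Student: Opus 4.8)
The plan is to translate the divisibility condition defining $Ord(f)$ into a statement about the multiplicity of a single root, and then to pin down exactly which root multiplicities a $q$-associate can have. Fix a root $\alpha$ of $g$. Since $g$ is irreducible of degree $m$ over the perfect field $\mathbb{F}_q$ it is separable, so for any nonzero $A \in \mathbb{F}_q[x]$ the exponent of $g$ in $A \circ x$ equals the multiplicity of $\alpha$ as a root of the $q$-associate $A \circ x$; indeed all conjugates $\alpha, \alpha^q, \dots, \alpha^{q^{m-1}}$ carry the same multiplicity because $A \circ x \in \mathbb{F}_q[x]$. Hence $f = g^b \mid A \circ x$ if and only if $A \circ x$ vanishes at $\alpha$ to order at least $b$. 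I would also record at the outset that, by Proposition~\ref{fequala} and Corollary~\ref{fdivx}, $Ord(g) = Ord(\alpha)$ divides $x^m - 1$ and therefore has nonzero constant term, a fact that is needed at the minimization step.

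The crux, and where I expect the real work to lie, is computing the possible multiplicities of a root of a $q$-associate. Writing a monic $A$ as $A = x^{s} B$ with $B(0) \neq 0$, the first key point is the identity
\[
(x^{s} B)\circ x = (B \circ x)^{q^{s}},
\]
which holds because every coefficient $c \in \mathbb{F}_q$ satisfies $c^{q} = c$, so multiplying $B$ by $x$ raises its $q$-associate to the $q$-th power. The second key point is that $B \circ x$ is separable whenever $B(0)\neq 0$: its formal derivative is the constant $B(0)$, since each term $x^{q^{i}}$ with $i \geq 1$ differentiates to $0$ in characteristic $p$. Consequently $\alpha$ is at worst a simple root of $B \circ x$, and therefore its multiplicity in $A \circ x = (B\circ x)^{q^{s}}$ is either $0$ or exactly $q^{s}$. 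In other words, a $q$-associate can exhibit only multiplicities that are powers of $q$, governed precisely by the $x$-adic valuation $s$ of $A$.

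Combining these observations, $g^b \mid A \circ x$ holds exactly when $q^{s} \geq b$ and $\alpha$ is a root of $B \circ x$; by Proposition~\ref{1} the latter is equivalent to $Ord(g)\mid B$. To finish I would minimize $\deg A = s + \deg B$ over all admissible $A$. The two constraints decouple: the least admissible cofactor is $B = Ord(g)$, which is legitimate precisely because $Ord(g)(0)\neq 0$, while the least admissible valuation is the smallest $t$ with $q^{t}\geq b$. This yields $Ord(f) = Ord(g)\,x^{t}$, where $t$ is the least integer with $q^{t}\geq b$; in the characteristic-$p$ form of the statement, and in particular whenever $q=p$, this is the least $t$ satisfying $p^{t}\geq b$. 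The only point demanding care is the minimality (uniqueness) of the resulting polynomial, but this follows the same division-with-remainder pattern already used in Proposition~\ref{1}.
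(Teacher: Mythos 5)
Your proof is correct, and it is organized genuinely differently from the paper's. The paper argues in two halves: for the upper bound it chains $g \mid h\circ x \Rightarrow g^b \mid (h\circ x)^b \mid (h\circ x)^{q^t} = (hx^t)\circ x$ (so $Ord(f) \mid hx^t$), and for the lower bound it introduces the least-degree linearized polynomial $L$ with $g^b \mid L$, argues that the roots of $L$ form the same Frobenius-invariant subspace as the roots of $h\circ x$, asserts that each root must occur with multiplicity $q^t$, and compares degrees. You instead characterize \emph{all} admissible polynomials at once: the decomposition $A = x^sB$ with $B(0)\neq 0$, the identity $(x^sB)\circ x = (B\circ x)^{q^s}$, and the separability of $B\circ x$ give that $f \mid A\circ x$ if and only if $q^s \geq b$ and $Ord(g)\mid B$, after which the minimization decouples and the answer falls out. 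This is cleaner: the fact the paper invokes without proof (that $L$ ``should contain the factor $(x-\alpha_i)^{q^t}$'', i.e.\ that root multiplicities of $q$-associates are powers of $q$) is exactly what your decomposition establishes, and you avoid the subspace-dimension count entirely.

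The more important point is that your argument exposes a genuine error in the statement, which your final sentence correctly hedges against: the threshold must be $q^t \geq b$, not $p^t \geq b$. The paper is internally inconsistent here --- it defines $t$ by $p^t \geq b$ but manipulates $q^t$ throughout its proof, and its lower-bound claim $q^sq^t \leq \deg(L)$ is false for that $t$ when $q \neq p$. Concretely, take $q=4$ (so $p=2$), $g = x-1$, $b=3$: the statement's $t$ equals $2$, predicting $Ord(g^3) = (x-1)x^2$ of degree $3$; but $(x^2-x)\circ x = x^{16}-x^4 = (x^4-x)^4$ is divisible by $(x-1)^4$, hence by $(x-1)^3$, so in fact $Ord(g^3) = (x-1)x$, matching your version with the least $t$ satisfying $q^t \geq b$, namely $t=1$. (A second, minor defect is the word ``positive'': at $b=1$ one must allow $t=0$, since plainly $Ord(g) \neq Ord(g)\,x$; your formulation handles this correctly.) So the proposition as printed holds only over prime fields, and your proof is a valid proof of the corrected general statement.
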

    \begin{proof}
        Let $Ord(g) = h$ and $Ord(f) = k$. Then, $g \vert h \circ x$, which implies $ g^b \vert (h\circ x)^b$,  $g^b \vert (h \circ x)^{q^t} $ and then $ g^b \vert h \circ x^{q^t} $. Using $g^b = f$, we get $ f \vert h \circ x^{q^t}$. Thus, $Ord(f) \vert hx^t$. \\
        Let $deg(h) = s$. We have, $g \vert h \circ x, $ which implies, $ g^b \vert (h\circ x)^b$, where $deg((h\circ x)^b) = q^sb$.
        Let $\alpha_1,\cdots, \alpha_m$ be distinct roots of $g$. We know that the roots of $h$ forms a subspace U. Since $h$ is the $\mathbb{F}_q$-Order of $g$, U is the smallest subspace containing $\alpha_1, \cdots, \alpha_m$. 
        Let $L$ be the least degree linearized polynomial such that $g^b \vert L$ (such a polynomial exits as $g^b \vert (h \circ x)^{q^t}$ ). Then $L$ should contain the factor $(x-\alpha_i)^{q^t}$.
        Since roots of $L$ forms a subspace $V$ and $L$ is the least degree linearized polynomial with $g^b\vert L$, $V=U$ which gives $q^sb \leq deg(L)$. By the choice of $t$, $q^sq^t \leq deg(L)$. Also, from proposition \ref{1},
        we have $L \vert (hx^t) \circ x$. Thus, $q^sq^t \geq deg(L) \implies L=(hx^t) \circ x$. Thus $Ord(f) = Ord(g)x^t$.  
    \end{proof}
    \begin{proposition}\label{8}
        Let $g_1,g_2,\dots, g_k$ be pairwise relatively prime polynomials over $\mathbb{F}_q[x]$, and $f=g_1g_2 \dots g_k$. Then $Ord(f) = lcm\{Ord(g_1), Ord(g_2), \dots , Ord(g_k)\}$.
    \end{proposition}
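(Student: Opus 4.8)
The plan is to prove the two divisibility relations $Ord(f) \mid L$ and $L \mid Ord(f)$, where I set $L := \mathrm{lcm}\{Ord(g_1), \dots, Ord(g_k)\}$, and then conclude equality from the fact that both polynomials are monic. The engine throughout is Proposition~\ref{1}, which converts the linearized divisibility $f \mid g \circ x$ into the ordinary divisibility $Ord(f) \mid g$ and conversely.

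For $Ord(f) \mid L$, I would first note that $Ord(g_i) \mid L$ for every $i$, by definition of the least common multiple. Applying the reverse implication of Proposition~\ref{1} to each $g_i$ then gives $g_i \mid L \circ x$. Because the $g_i$ are pairwise relatively prime in the UFD $\mathbb{F}_q[x]$, a family of pairwise coprime divisors of a common polynomial has product dividing that polynomial; hence $f = g_1 g_2 \cdots g_k \mid L \circ x$. A final application of the forward implication of Proposition~\ref{1} to $f$ yields $Ord(f) \mid L$.

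For the reverse divisibility $L \mid Ord(f)$, it suffices to show $Ord(g_i) \mid Ord(f)$ for each $i$, since $L$ is the least common multiple of the $Ord(g_i)$. By the very definition of $Ord(f)$ we have $f \mid Ord(f) \circ x$. Since $g_i \mid f$, transitivity gives $g_i \mid Ord(f) \circ x$, and the forward implication of Proposition~\ref{1} applied to $g_i$ then yields $Ord(g_i) \mid Ord(f)$. Taking the lcm over $i$ completes this direction, and combining the two divisibilities with monicity gives $Ord(f) = L$.

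The only genuinely load-bearing hypothesis is the pairwise coprimeness, and it enters exactly once: in passing from ``$g_i \mid L \circ x$ for all $i$'' to ``$\prod_i g_i \mid L \circ x$''. Without coprimeness one obtains only that the lcm of the $g_i$ divides $L \circ x$, which need not force $f \mid L \circ x$; so the step to treat with care is precisely this one, where pairwise coprimeness lets the product of the $g_i$ coincide (up to the monic normalization) with their lcm. The reverse divisibility, by contrast, uses only $g_i \mid f$ and requires no coprimeness assumption at all.
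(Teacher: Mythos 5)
Your proposal is correct and follows essentially the same route as the paper: both directions hinge on Proposition~\ref{1}, with pairwise coprimeness used exactly once to pass from $g_i \mid L \circ x$ for all $i$ to $f \mid L \circ x$, and the reverse divisibility obtained from $g_i \mid f \mid Ord(f)\circ x$. The only cosmetic difference is that the paper phrases the first direction via the polynomials $H_i = Ord(g_i)\circ x$ and their lcm, whereas you invoke Proposition~\ref{1} directly; the underlying argument is identical, and your explicit flagging of where coprimeness is needed is a point the paper leaves implicit.
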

    \begin{proof}
        Let $Ord(g_i) = h_i ~ \forall~i, 1 \leq i \leq k$ and $l = lcm \{ h_1, h_2, \dots h_k \} $, then $h_i \vert l ~\forall~i$, further $H_i\vert l \circ x ~\forall~i$, where $H_i= h_i \circ x$.\\
        We have, $g_i \vert H_i ~ \forall~i$, which implies that $g_1g_2\dots g_k \vert lcm\{H_1,H_2,\dots,H_k\} \vert l \circ x$. Thus, $f \vert l \circ x$, which implies $Ord(f) \vert l$.\\
        Let $Ord(f) = j$, then $f \vert j \circ x$, which implies $g_i \vert j \circ x ~ \forall~i$. Since $Ord(g_i) = h_i, h_i \vert j ~\forall~i$. which implies $lcm\{h_1,h_2, \dots, h_k\} \vert j=Ord(f)$.     
    \end{proof}
    If the $\mathbb{F}_q$-Order of an element in $\mathbb{F}_{q^n}$ is $x^n-1$, then the element is normal. The identical thing holds true for normal polynomials, which is defined as the minimal polynomial of a normal element in a finite field \cite{LIDL}.
    \begin{proposition}
        A polynomial $f \in \mathbb{F}_q[x]$ of degree m is normal over $\mathbb{F}_q$ if and only if $f$ is monic and $Ord(f) = x^m-1$.
    \end{proposition}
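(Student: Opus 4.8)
The plan is to reduce both implications to the element-level theory through Proposition~\ref{fequala}, which identifies the $\mathbb{F}_q$-Order of an irreducible polynomial with that of its root, and then to translate the equality $Ord(f) = x^m-1$ into the statement that this root is $0$-normal. The only genuine work lies in the converse: I must first show that the hypothesis $Ord(f) = x^m-1$ by itself already forces $f$ to be irreducible, after which everything descends from the characterization $\deg(Ord(\alpha)) = n-k$ of $k$-normality.

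For the forward implication I would argue as follows. If $f$ is normal, then by definition $f$ is the minimal polynomial of a normal element $\alpha$, so $f$ is monic and irreducible of degree $m$ and $\alpha$ generates $\mathbb{F}_{q^m}$. Normality of $\alpha$ means $\dim_{\mathbb{F}_q}\langle \alpha, \dots, \alpha^{q^{m-1}}\rangle = m$, i.e. $\alpha$ is $0$-normal, so $\deg(Ord(\alpha)) = m$. Since $Ord(\alpha)$ is monic, divides $x^m-1$, and has degree $m$, it must equal $x^m-1$. As $f$ is irreducible, Proposition~\ref{fequala} then gives $Ord(f) = Ord(\alpha) = x^m-1$, and $f$ is monic. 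This direction is routine.

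For the converse I would assume $f$ monic of degree $m$ with $Ord(f) = x^m-1$ and factor $f = \prod_i g_i^{b_i}$ into distinct monic irreducibles $g_i$ of degree $d_i$, so $\sum_i b_i d_i = m$. First I would rule out repeated factors: by Proposition~\ref{7}, any factor with $b_i \ge 2$ contributes a factor $x^{t_i}$ with $t_i \ge 1$ to $Ord(g_i^{b_i})$, hence (through Proposition~\ref{8}, since the $g_i^{b_i}$ are pairwise coprime) to $Ord(f)$; but $x \nmid x^m-1$, a contradiction. Thus $f$ is squarefree and $Ord(f) = \mathrm{lcm}_i\, Ord(g_i)$. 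Using Corollary~\ref{fdivx} I would bound $\deg(Ord(f)) \le \sum_i \deg(Ord(g_i)) \le \sum_i d_i = m$; since $\deg(Ord(f)) = m$, both inequalities are equalities. The second equality forces $\deg(Ord(g_i)) = d_i$, hence $Ord(g_i) = x^{d_i}-1$, while the first forces the $Ord(g_i)$ to be pairwise coprime. If there were two distinct factors, $x^{d_i}-1$ and $x^{d_j}-1$ would share the root $1$, contradicting coprimality. Hence $f$ is irreducible, and Proposition~\ref{fequala} yields $Ord(\alpha) = x^m-1$ for a root $\alpha$; then $\deg(Ord(\alpha)) = m$ makes $\alpha$ a normal element of $\mathbb{F}_{q^m}$ with minimal polynomial $f$, so $f$ is normal.

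The hard part will be establishing irreducibility in the converse, and specifically recognizing the two independent ways degree can be lost in $\mathrm{lcm}_i\, Ord(g_i)$: a repeated irreducible factor, which Proposition~\ref{7} detects via the spurious factor of $x$, and the unavoidable common factor $x-1$ among the orders of several normal irreducible factors, which Proposition~\ref{8} together with Corollary~\ref{fdivx} exposes. The key observation making the argument clean is that these two obstructions cannot simultaneously be avoided unless $f$ has exactly one irreducible factor.
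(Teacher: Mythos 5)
Your proof is correct, and it uses exactly the ingredients of the paper's own argument: the forward direction descends to the element level via Proposition~\ref{fequala}, and the converse rules out reducibility through the same two obstructions --- a repeated irreducible factor forces $x \mid Ord(f)$ (Proposition~\ref{7} combined with Proposition~\ref{8}), impossible since $x \nmid x^m-1$, while several distinct irreducible factors force a loss of degree in the lcm of their orders because of the shared divisor $x-1$ (Proposition~\ref{8} with Corollary~\ref{fdivx}). The difference is one of execution, and it is in your favor. The paper argues by a trichotomy: $f=g^b$ with $g$ irreducible, $f=gh$ with both factors irreducible, or $f=g^ah^b$ with $g,h$ coprime and a repeated factor; read literally, the squarefree case is only treated when there are exactly two distinct irreducible factors, and the third case is stated with the garbled condition ``$a$ or $b\geq 1$'' (the multiple-of-$x$ conclusion drawn there needs an exponent at least $2$, since Proposition~\ref{7} gives $t=0$ when $b=1$). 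A product of three or more distinct irreducibles therefore falls through the paper's case analysis as written, although the same degree count clearly disposes of it. Your version avoids this entirely by factoring $f$ into all of its irreducible factors at once and running the count uniformly: equality throughout $\deg(Ord(f)) \le \sum_i \deg(Ord(g_i)) \le \sum_i d_i = m$ forces each $Ord(g_i)=x^{d_i}-1$ and pairwise coprimality of these orders, which is impossible for two or more factors since each is divisible by $x-1$. So your write-up is the same proof in spirit, but it is the complete one.
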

          \begin{proof}
        Let $f$ be a normal polynomial in $\mathbb{F}_q[x]$, and $g=Ord(f)$. Which implies $f \vert g \circ x$.\\
        Let $\alpha$ be the root of $f$, then $Ord(\alpha) = x^m-1$. Since $f \vert g \circ x, ~ g\circ \alpha = 0 \implies g= Ord(f) = x^m - 1$.\\
        Conversely, Let $Ord(f) = x^m - 1$. Suppose $f$ is reducible, then either $f=g^b$, for some irreducible polynomial $g$, or $f=gh$ with $g,h$ irreducible or $f=g^ah^b$ for some relatively prime polynomials $g,h$ with $a,b\geq 1$\\
        Suppose $f=g^b$, then by proposition \ref{7}, $x \vert Ord(f) \implies x\vert x^m-1$ a contradiction.\\
        Suppose $f=g^ah^b$, with $g^a$ and $h^b$ being relatively prime and $a \text{ or } b\geq 1$ then $Ord(f) = lcm(Ord(g^a),Ord(h^b))$, a multiple of $x$ from proposition \ref{8} which implies $x \vert x^m-1$, a contradiction.\\
        Suppose $f=gh$ with $f,g$ being irreducible, $deg(g)=e_1$ and $deg(h) = e_2$ then $m=e_1+e_2$.We know that $x^m-1=Ord(f)=lcm(Ord(g), Ord(h))$.
        Now by corollary \ref{fdivx} $Ord(g) \vert x^{e_1} - 1, Ord(h) \vert x^{e_2} - 1$. So $lcm(Ord(g),Ord(h))\vert lcm(x^e_1-1,x^e_2-1)$ which implies $ x^m-1\vert lcm(x^e_1-1,x^e_2-1) $ which gives, $ m< e_1+e_2$ a contradiction.\\
        Therefore $f$ is a irreducible polynomial over $F_q$. So by preposition \ref{fequala}, we have $F_q$-Order of all of its roots is $x^m-1$. Hence the roots are normal elements. Hence $f$ is normal polynomial.        
    \end{proof}
    We know number of elements with specific $\mathbb{F}_q$-Order. Now we derive a formula for number of irreducible polynomials with fixed degree whose $\mathbb{F}_q$-Order is same. 
    \begin{lemma}\label{degmin}
        If $\alpha$ is an element with $\mathbb{F}_q$-Order $f$, then degree of minimal polynomial of $\alpha$, $deg(m_{\alpha}(x)) = min\{ r \geq 1 : f(x) \vert x^r - 1 \} = ord(f)$.
    \end{lemma}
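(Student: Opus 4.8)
The plan is to reduce the entire statement to a single chain of equivalences relating the divisibility $f \mid x^r - 1$ to the Frobenius‑fixity of $\alpha$. First, observe that the second equality $min\{r \geq 1 : f \mid x^r - 1\} = ord(f)$ is immediate from the definition of the order of a polynomial, since $ord(f)$ is by definition the least positive integer $e$ with $f \mid x^e - 1$. Moreover, because $f = Ord(\alpha)$ divides $x^n - 1$ and $x \nmid x^n - 1$, we have $f(0) \neq 0$, so $ord(f)$ is well defined. Thus the whole content of the lemma lies in the first equality, $deg(m_\alpha(x)) = min\{r \geq 1 : f \mid x^r - 1\}$.

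Write $d = deg(m_\alpha(x)) = [\mathbb{F}_q(\alpha) : \mathbb{F}_q]$, so that $\mathbb{F}_q(\alpha) = \mathbb{F}_{q^d}$. The heart of the argument is the chain of equivalences, valid for every integer $r \geq 1$:
\[ f \mid x^r - 1 \iff (x^r - 1)(\sigma_q)(\alpha) = 0 \iff \alpha^{q^r} = \alpha \iff \alpha \in \mathbb{F}_{q^r} \iff d \mid r. \]
For the first equivalence I would invoke the defining least‑degree property of $Ord(\alpha)$ through the division algorithm, exactly mirroring Proposition~\ref{1}: writing $x^r - 1 = f h + s$ with $\deg s < \deg f$ and applying the commuting operator identity $(x^r-1)(\sigma_q) = h(\sigma_q)f(\sigma_q) + s(\sigma_q)$ to $\alpha$ yields $(x^r-1)(\sigma_q)(\alpha) = s(\sigma_q)(\alpha)$, which vanishes precisely when $s = 0$, i.e.\ when $f \mid x^r - 1$. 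The middle equivalence is the expansion $(x^r - 1)(\sigma_q)(\alpha) = \sigma_q^r(\alpha) - \alpha = \alpha^{q^r} - \alpha$, and the last two are the standard descriptions of $\mathbb{F}_{q^r}$ as the fixed set of $\sigma_q^r$ together with the subfield criterion $\mathbb{F}_{q^d} \subseteq \mathbb{F}_{q^r} \iff d \mid r$.

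Granting this chain, the set $\{r \geq 1 : f \mid x^r - 1\}$ coincides with $\{r \geq 1 : d \mid r\}$, whose least element is $d$ itself. Hence $min\{r \geq 1 : f \mid x^r - 1\} = d = deg(m_\alpha(x))$, which together with the first observation completes the proof. The one step that demands genuine care, and hence the main obstacle, is the first equivalence: one must justify that annihilation of $\alpha$ by a polynomial evaluated at $\sigma_q$ is governed \emph{exactly} by divisibility by $Ord(\alpha)$. This is the additive, Frobenius‑module analogue of the minimal‑polynomial property and parallels Proposition~\ref{1}; once it is in place, everything else is routine finite‑field bookkeeping.
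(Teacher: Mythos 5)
Your proposal is correct and takes essentially the same route as the paper: both rest on the equivalence $f \mid x^r - 1 \iff (x^r-1)(\sigma_q)(\alpha) = 0 \iff \alpha^{q^r} = \alpha$, obtained from the minimality/annihilator property of $Ord(\alpha)$, and then identify $\deg(m_\alpha(x))$ with the least such $r$. The only difference is that you prove in-line what the paper asserts or cites --- the division-algorithm argument for the annihilator ideal (the paper just states $(f(x)) = \{g : g(\sigma)(\alpha)=0\}$) and the subfield criterion $d \mid r$ in place of the reference to Shoup --- making your version more self-contained but not a different proof.
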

    \begin{proof}
        Consider the Frobenius automorphism $\sigma$, which maps $\alpha$ to $\alpha^q$ for every $\alpha$. Since $f(x)$ is the $\mathbb{F}_q$-Order of $\alpha$, $(f(x)) = \{ g(x) : g(\sigma)(\alpha) = 0\} $. The condition $\alpha^{q^r} = \alpha$ is equivalent to $(x^r - 1)(\sigma)(\alpha) = 0$, which is if and only if $x^r - 1 \in (f(x))$ or equivalently $f \vert x^r-1$. \\ 
        Now, using the result for degree of minimal polynomial in Chapter 19 of \cite{Shoup2009}, degree of $m_{\alpha} (x) = min\{r>0: \alpha^{q^r} = \alpha\}$, by previous discussions which is same as $min\{ r \geq 1 : f(x) \vert x^r - 1 \} = ord(f)$.
    \end{proof}
    \begin{proposition}
        The number of irreducible polynomials of degree n with $\mathbb{F}_q$-Order $f$ is $ \dfrac{\phi(f)}{n}$ only when $ord(f)$ is $n$, 0 otherwise.
    \end{proposition}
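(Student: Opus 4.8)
The plan is to count \emph{elements} of prescribed $\mathbb{F}_q$-Order rather than polynomials directly, and then to divide by the size of each Frobenius orbit. First I would recall the two facts established above: by Proposition~\ref{fequala} the $\mathbb{F}_q$-Order of an irreducible polynomial coincides with the $\mathbb{F}_q$-Order of any of its roots, and by Lemma~\ref{degmin} every element $\alpha$ with $Ord(\alpha)=f$ has minimal polynomial of degree exactly $ord(f)$. These immediately settle the ``$0$ otherwise'' assertion: if $p$ is irreducible of degree $n$ with $Ord(p)=f$, then $p$ equals the minimal polynomial $m_\alpha$ of each of its roots $\alpha$, whence $n=\deg(p)=\deg(m_\alpha)=ord(f)$; so when $ord(f)\neq n$ no such $p$ can exist.

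For the main case $ord(f)=n$, I would work with the set $S=\{\alpha : Ord(\alpha)=f\}$, whose cardinality is $\phi(f)$ by the enumeration of elements of a fixed $\mathbb{F}_q$-Order recalled just before the statement. By Lemma~\ref{degmin} every $\alpha\in S$ has degree $n$ over $\mathbb{F}_q$, so $S\subseteq\mathbb{F}_{q^n}$ and each $\alpha\in S$ is a root of a unique monic irreducible polynomial of degree $n$, namely $m_\alpha$, which satisfies $Ord(m_\alpha)=Ord(\alpha)=f$ by Proposition~\ref{fequala}. Hence $S$ is exactly the union of the root sets of the degree-$n$ irreducible polynomials we wish to count.

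The key step is to show that $S$ is partitioned into Frobenius orbits, each of size exactly $n$ and each corresponding to one such polynomial. First I would check that $S$ is $\sigma_q$-invariant: if $Ord(\alpha)=f$ then $f(\sigma_q)(\alpha)=0$, and since $\sigma_q$ commutes with $f(\sigma_q)$ we obtain $f(\sigma_q)(\alpha^q)=\sigma_q\big(f(\sigma_q)(\alpha)\big)=0$; minimality transfers identically, so $Ord(\alpha^q)=f$ and $\alpha^q\in S$. Because each $\alpha\in S$ has degree $n$, its conjugates $\alpha,\alpha^q,\dots,\alpha^{q^{n-1}}$ are distinct and constitute a single orbit of size $n$, which is precisely the set of $n$ distinct roots of the separable irreducible polynomial $m_\alpha$. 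Distinct irreducible polynomials have disjoint root sets, so these orbits are in bijection with the polynomials counted. Partitioning the $\phi(f)$ elements of $S$ into orbits of size $n$ then yields exactly $\phi(f)/n$ polynomials.

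The main obstacle I anticipate lies in pinning down the interpretation of $\phi(f)$: one must ensure that it denotes the number of elements whose $\mathbb{F}_q$-Order equals $f$ \emph{exactly} (not merely divides $f$), and that $S$ is finite and confined to the single field $\mathbb{F}_{q^n}$ — both consequences of Lemma~\ref{degmin}. The remaining delicate points are the Frobenius-invariance of the $\mathbb{F}_q$-Order and the separability of irreducible polynomials over $\mathbb{F}_q$, which together force the orbits to have uniform size $n$ and thereby make the division by $n$ exact.
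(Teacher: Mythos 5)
Your proof is correct and follows essentially the same route as the paper's: both cases rest on Proposition~\ref{fequala} and Lemma~\ref{degmin}, identifying the set of elements of $\mathbb{F}_q$-Order $f$ (of cardinality $\phi(f)$) with the set of roots of the degree-$n$ irreducible polynomials being counted, and then dividing by $n$. Your explicit check of Frobenius invariance and the orbit-of-size-$n$ decomposition is just a more detailed packaging of the same double-counting argument that the paper carries out by proving $S=T$.
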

    \begin{proof}
        \textbf{Case 1:} $ord(f) = n$\\
        Let $g_1, g_2, \dots, g_k$ be the irreducible polynomials with degree n whose $\mathbb{F}_q$-Order is $f$, and $S$ be the set of all roots of them. Then $|S| = nk$. Also, $B$ be the set of all elements in $\mathbb{F}_{q^n}$ with $\mathbb{F}_q$-Order $f$. From proposition $\ref{fequala}$, $S \subseteq T$.

        From lemma \ref{degmin}, all the elements of $T$ have minimal polynomial of equal degree. Since $S \subseteq T, S \neq \phi$, any $\alpha \in S$ has minimal polynomial of degree n i.e one of the $g_i$'s by the definition. Thus all elements of $T$ has minimal polynomial of degree $n$. So it should be one of the $g_i$'s. Thus $S = T$. Therefore number of irreducible polynomials of degree $n$ with $\mathbb{F}_q$-Order $f$ is given by $\dfrac{\phi(f)}{n}$ whenever $n \vert \phi(f)$. \\
        \textbf{Case 2:} $ord(f) \neq n$\\
        If order of $f$ is other than $n$, then by lemma \ref{degmin}, degree of minimal polynomial of elements with $\mathbb{F}_q$-Order $f$ is not equal to $n$. Thus, they are not among the $g_i$'s. Thus number will be equal to zero.
    \end{proof}
    The $k$-normal polynomials are defined by Mahmood Alizadeh et. al \cite{knpoly}. A monic irreducible polynomial of degree $n$ over a finite field $\mathbb{F}_q$ is said to be $k$-normal if its roots are the $k$-normal elements over the field $\mathbb{F}_{q^n}$. Using the above results number of $k$-normal polynomials of degree $n$ will be equal to $ \displaystyle \sum_{\substack{h \vert x^n-1 \\ deg(h) = n-k \\ n \vert \phi(h)}}\frac{\phi(h)}{n}$.
    
\begin{theorem}
    Let $f\in \mathbb{F}_q[x]$ with $deg(g)\geq 1.$ Let $a \in \mathbb{F}_q$ be such that $h$ be the least degree polynomial with $f(x)\vert (h\circ x-a)$. Then $Ord(f)=Ord(a).h$
\end{theorem}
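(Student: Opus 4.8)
The plan is to split off the trivial case $a=0$ and then, for $a\neq 0$, reduce the claim to two divisibilities, $Ord(f)\mid Ord(a)\,h$ and $Ord(a)\,h\mid Ord(f)$, reinforced by a degree count. First I would record the value of $Ord(a)$ for $a\in\mathbb{F}_q$: if $a\neq 0$ then $a^{q}=a$, so $x-1$ annihilates $a$ under the Frobenius while no constant does, whence $Ord(a)=x-1$; if $a=0$ then $Ord(0)=1$ and the asserted identity collapses to the defining property of $Ord(f)$ (the least-degree $h$ with $f\mid h\circ x$ is $Ord(f)$ itself). So it suffices to treat $a\neq 0$ and prove $Ord(f)=(x-1)h$.

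For the forward divisibility I would exploit the compatibility of the operation $p\mapsto p\circ x$ with symbolic multiplication. The key computation is the identity
\[
\bigl((x-1)h\bigr)\circ x=(h\circ x)^{q}-(h\circ x),
\]
which holds because $(xh)\circ x=(h\circ x)^{q}$ in characteristic $p$ (the Frobenius is additive and the coefficients of $h$ lie in $\mathbb{F}_q$). Since $a\in\mathbb{F}_q$ gives $a^{q}=a$, one also has $(h\circ x-a)^{q}-(h\circ x-a)=(h\circ x)^{q}-(h\circ x)$. Now $f\mid h\circ x-a$ forces $f\mid(h\circ x-a)^{q}$ as well, hence $f$ divides their difference $\bigl((x-1)h\bigr)\circ x=\bigl(Ord(a)\,h\bigr)\circ x$. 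Proposition \ref{1} then yields $Ord(f)\mid Ord(a)\,h$.

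Next I would show $(x-1)\mid Ord(f)$, which is where the hypothesis that a \emph{nonzero} constant $a$ occurs is essential. If $(x-1)\nmid Ord(f)$, then $\gcd(x-1,Ord(f))=1$, so $Ord(f)\mid(x-1)h$ would give $Ord(f)\mid h$; by Proposition \ref{1} this means $f\mid h\circ x$, and combined with $f\mid h\circ x-a$ it forces $f\mid a$, impossible for a nonzero constant when $\deg f\ge 1$. Writing $Ord(f)=(x-1)h_{0}$ with $h_{0}$ monic, the relation $Ord(f)\mid(x-1)h$ from the previous step becomes $h_{0}\mid h$.

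The remaining and, I expect, hardest step is the reverse inequality $\deg h\le\deg h_{0}$, which upgrades $h_{0}\mid h$ to $h=h_{0}$ and hence to $Ord(f)=(x-1)h$. The cleanest route is to pass to $A=\mathbb{F}_q[t]/(f(t))$, write $\bar t$ for the class of $t$, and let $\Phi(\eta)=\eta^{q}$ be the Frobenius of $A$; then $f\mid p\circ x$ is equivalent to $p(\Phi)(\bar t)=0$, so $Ord(f)$ is precisely the minimal polynomial of $\bar t$ under $\Phi$, and the cyclic module $\mathbb{F}_q[\Phi]\,\bar t$ is isomorphic to $\mathbb{F}_q[x]/(Ord(f))$ via $\bar t\leftrightarrow 1$. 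The hypothesis $f\mid h\circ x-a$ now reads $h(\Phi)(\bar t)=a\cdot 1_A$, and $a\cdot 1_A$ is a nonzero $\Phi$-fixed element, hence annihilated by $x-1$. Since $Ord(f)=(x-1)h_{0}$, the kernel of multiplication by $x-1$ on $\mathbb{F}_q[x]/(Ord(f))$ is exactly the line of scalar multiples of $h_{0}$, so the least-degree solution $h$ must be a scalar multiple of $h_{0}$, giving $\deg h=\deg h_{0}=\deg Ord(f)-1$. Together with $Ord(f)\mid(x-1)h$ and the monic normalization of $h$, this forces $Ord(f)=(x-1)h=Ord(a)\,h$. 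The essential obstacle is exactly this degree lower bound: the forward divisibility alone leaves $\deg h$ unpinned, and one needs the cyclic-module description (equivalently, when $f$ is irreducible, the factorization $(h_{0}\circ x)^{q}-(h_{0}\circ x)=\prod_{c\in\mathbb{F}_q}(h_{0}\circ x-c)$) to see that $h$ cannot drop below degree $\deg Ord(f)-1$.
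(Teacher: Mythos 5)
Your proof is correct, and for the substantive half of the theorem it takes a genuinely different route from the paper. The divisibility $Ord(f)\mid Ord(a)\,h$ is essentially the same in both arguments: your identity $((x-1)h)\circ x=(h\circ x)^{q}-(h\circ x)$ combined with Proposition~\ref{1} is precisely the paper's one-line computation $(Ord(a)\,h)\circ x\equiv Ord(a)\circ a\equiv 0 \pmod{f}$, with $Ord(a)$ made explicit as $x-1$. The difference is in the reverse direction. The paper stays elementary: it divides $Ord(f)=hm+n$ with $\deg n<\deg h$, observes $n\circ x\equiv-m\circ a\pmod{f}$ where $-m\circ a$ is again a constant, kills $n$ by the minimality of $h$, and then $m\circ a=0$ gives $Ord(a)\mid m$, hence $Ord(a)h\mid Ord(f)$; note that this step (like the paper's opening claim $\deg Ord(f)\ge\deg h$) tacitly reads $h$ as least-degree among polynomials whose $q$-associate is congruent to \emph{some} constant modulo $f$, zero included. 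You instead fix $a$, dispose of $a=0$ separately, prove $(x-1)\mid Ord(f)$ by a coprimality argument, and then pin down $h$ structurally: in the $\mathbb{F}_q[x]$-module $\mathbb{F}_q[t]/(f)$ with $x$ acting as Frobenius, the cyclic submodule generated by $\bar t$ is $\mathbb{F}_q[x]/(Ord(f))$, the class of $h$ is a nonzero element of the kernel of multiplication by $x-1$, and that kernel is exactly the line spanned by $h_0=Ord(f)/(x-1)$; hence $h$ is a scalar multiple of $h_0$. This costs more machinery but buys more: it works for the fixed $a$ without appealing to minimality over all constants, it identifies $h$ exactly up to a scalar $c$ (the same monic-normalization issue that both proofs gloss over), and it explains structurally why the cofactor of $x-1$ in $Ord(f)$ is the only candidate for $h$. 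Both proofs ultimately rest on Proposition~\ref{1}.
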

\begin{proof}
    Let $k=Ord(f)$, then $f(x)\vert k\circ x$ which implies $deg(k)\geq deg(h)$.\\
     Therefore by division algorithm $k=hm+n$ with $0\leq deg(n) < deg(h)$ or $n=0$ which implies $$ (hm+n)\circ x\equiv 0~ mod{f(x)}$$ which gives $$ n\circ x \equiv -m\circ a~ mod{f(x)}$$ as  $(hm)\circ x=m\circ h \circ x \equiv m\circ a~mod{f(x)}$.\\ Hence $n=0$ which gives $Ord(a)\vert m$ as $m\circ a=0$. Therefore $(Ord(a)h)\vert hm=k$. Conversly, $$(Ord(a)h)\circ x \equiv Ord(a) \circ a\equiv 0 ~mod{f(x)}$$
     Hence $k \vert Ord(a)h$.
\end{proof}
\section{Algebraic and Combinatorial Properties of $r$-Primitive and $k$-Normal Elements}\label{sec3}
    The following lemmas help in constructing r-primitive and k-normal elements in higher fields.
    \begin{lemma}
        Let $m\vert n$ and $\alpha \in \mathbb{F}_{q^m}, \text{ then } \alpha$ is $r$-primitive in $\mathbb{F}_{q^m}$ if and only if $\alpha$ is $kr$-primitive in $\mathbb{F}_{q^n}$, where $k=\dfrac{q^n-1}{q^m-1}.$
    \end{lemma}
    \begin{lemma}
        Let $m\vert n$ and $\alpha \in \mathbb{F}_{q^m}, \text{ then } $ $\alpha$ is $k$-normal in $\mathbb{F}_{q^m}$ if and only if $\alpha$ is $n-m+k$-normal in $\mathbb{F}_{q^n}$.
    \end{lemma}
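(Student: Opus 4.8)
The plan is to exploit the fact that the $\mathbb{F}_q$-Order of $\alpha$ is an intrinsic invariant of $\alpha$ under the Frobenius action, entirely independent of whether we regard $\alpha$ as living in $\mathbb{F}_{q^m}$ or in the larger field $\mathbb{F}_{q^n}$. Concretely, $Ord(\alpha)$ is the monic least-degree polynomial $g(x)=\sum_i c_i x^i \in \mathbb{F}_q[x]$ with $\sum_i c_i \alpha^{q^i}=0$, and this condition records precisely the linear dependence relations over $\mathbb{F}_q$ among the conjugates $\alpha, \alpha^q, \alpha^{q^2},\dots$. The first step is to observe that, since $\alpha \in \mathbb{F}_{q^m}$, all of these conjugates already lie in $\mathbb{F}_{q^m}$, and $\mathbb{F}_q$-linear relations among a fixed set of elements of $\mathbb{F}_{q^m}$ are unchanged under the inclusion $\mathbb{F}_{q^m}\hookrightarrow \mathbb{F}_{q^n}$. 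Hence the same monic polynomial of least degree solves the defining condition in both fields, so $Ord(\alpha)$ is literally the same polynomial in either setting.

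With this established, write $d = \deg(Ord(\alpha))$ for this common degree. Viewing $\mathbb{F}_{q^m}$ as a degree-$m$ extension of $\mathbb{F}_q$, the definition of $k$-normality says $\alpha$ is $k$-normal in $\mathbb{F}_{q^m}$ exactly when $\deg(Ord(\alpha)) = m-k$, i.e. $d = m-k$. Viewing $\mathbb{F}_{q^n}$ as a degree-$n$ extension, $\alpha$ is $k'$-normal in $\mathbb{F}_{q^n}$ exactly when $d = n-k'$. Eliminating $d$ from these two relations gives $k' = n-d = n-(m-k) = n-m+k$, which is precisely the claimed index. Thus $\alpha$ is $k$-normal in $\mathbb{F}_{q^m}$ if and only if $d=m-k$ if and only if $d = n-(n-m+k)$ if and only if $\alpha$ is $(n-m+k)$-normal in $\mathbb{F}_{q^n}$, proving both implications simultaneously.

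The main obstacle, and the only point requiring genuine care, is the field-independence claim in the first step: one must be sure that $Ord(\alpha)$ does not secretly depend on the ambient degree. I would justify this by noting that in $\mathbb{F}_{q^m}$ we have $Ord(\alpha) \mid x^m-1$, while in $\mathbb{F}_{q^n}$ we have $Ord(\alpha)\mid x^n-1$; since $m\mid n$ forces $x^m-1 \mid x^n-1$, the divisibility constraints are consistent and impose no extra relation in the larger field. Because $\sigma_q^m$ already fixes $\alpha$, no conjugate beyond those appearing in $\mathbb{F}_{q^m}$ can introduce a new dependence, so the minimal annihilating polynomial cannot shrink or change when passing to $\mathbb{F}_{q^n}$. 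Once this invariance is secured, the remainder is the elementary degree bookkeeping carried out above, and the equivalence follows directly.
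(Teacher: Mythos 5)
Your proof is correct. The paper states this lemma without any proof at all, so there is nothing to compare against; your argument is the natural one and fills the gap cleanly. The key point --- that $Ord(\alpha)$ is defined by the ideal of polynomials $g \in \mathbb{F}_q[x]$ with $g(\sigma_q)(\alpha)=0$, a condition involving only $\alpha$ and the map $\beta \mapsto \beta^q$, hence independent of whether the ambient field is $\mathbb{F}_{q^m}$ or $\mathbb{F}_{q^n}$ --- is exactly the right invariance to isolate, and you justify it adequately (the conjugates of $\alpha$ cycle with period dividing $m$, so no new $\mathbb{F}_q$-linear relations can appear or disappear in the larger field). After that, the identity $k' = n - (m-k) = n-m+k$ is immediate from the definition of $k$-normality as $\deg(Ord(\alpha)) = n-k$.
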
 
    In a cyclic field, $ord(ab) = lcm(ord(a),ord(b))$ where the order of $a$ and $b$ are relatively prime. Using this concept, we obtain the following result.
    \begin{proposition}\label{Ordmul}
        Let $\mathbb{F}_{q^n}$ be a field and $a$ be $r_1$-primitive, $b$ be $r_2$-primitive in $\mathbb{F}_{q^n}$. If $gcd(\dfrac{q^n-1}{r_1}, \dfrac{q^n-1}{r_2}) = 1$, then $ab$ is $gcd(r_1, r_2)$-primitive in $\mathbb{F}_{q^n}$.
    \end{proposition}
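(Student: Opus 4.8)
The plan is to work inside the cyclic group $\mathbb{F}_{q^n}^*$ of order $N := q^n - 1$, and to reduce everything to the multiplicative orders $d_1 := ord(a) = N/r_1$ and $d_2 := ord(b) = N/r_2$. The hypothesis is precisely $gcd(d_1, d_2) = 1$, so I would first invoke the cited fact that elements of coprime order in an abelian group satisfy $ord(ab) = lcm(d_1, d_2) = d_1 d_2$. This yields the clean formula $ord(ab) = (N/r_1)(N/r_2) = N^2/(r_1 r_2)$, and it then remains only to identify this quantity with $N/gcd(r_1, r_2)$.

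The key reduction is the elementary identity $gcd(r_1, r_2)\, lcm(r_1, r_2) = r_1 r_2$, which shows that $N^2/(r_1 r_2) = N/gcd(r_1, r_2)$ holds if and only if $lcm(r_1, r_2) = N$. Thus the entire proposition hinges on proving that the hypothesis $gcd(N/r_1, N/r_2) = 1$ forces $lcm(r_1, r_2) = q^n - 1$.

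To establish this equivalence I would argue prime-by-prime on $p$-adic valuations. For each prime $p$, set $a_p = v_p(N)$, $b_p = v_p(r_1)$ and $c_p = v_p(r_2)$; since $r_1, r_2 \mid N$, we have $0 \le b_p, c_p \le a_p$. Then $v_p(N/r_1) = a_p - b_p$ and $v_p(N/r_2) = a_p - c_p$ are both nonnegative, and the coprimality hypothesis says $\min(a_p - b_p,\, a_p - c_p) = 0$ for every $p$. As the two quantities are nonnegative, this forces at least one of them to vanish, i.e. $\max(b_p, c_p) = a_p$, which is exactly $v_p(lcm(r_1, r_2)) = v_p(N)$. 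Letting $p$ range over all primes gives $lcm(r_1, r_2) = N$, and the result follows.

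I expect the only genuine content here to be the valuation bookkeeping. The one subtlety to watch is that $ord(ab) = d_1 d_2$ might a priori exceed $N$; but coprimality together with $d_1, d_2 \mid N$ guarantees $d_1 d_2 = lcm(d_1, d_2) \mid N$, so this product order is legitimately realized inside $\mathbb{F}_{q^n}^*$. Everything else is routine.
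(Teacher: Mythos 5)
Your proposal is correct and takes essentially the same approach as the paper: both reduce to the group-theoretic fact that elements of coprime multiplicative order in $\mathbb{F}_{q^n}^*$ satisfy $ord(ab) = lcm(ord(a), ord(b))$, and then identify this quantity with $\frac{q^n-1}{\gcd(r_1,r_2)}$. The only difference is bookkeeping: the paper invokes the unconditional identity $lcm\!\left(\frac{N}{r_1},\frac{N}{r_2}\right) = \frac{N}{\gcd(r_1,r_2)}$ directly, whereas you pass through the product $d_1 d_2 = N^2/(r_1 r_2)$ and verify by $p$-adic valuations that the hypothesis forces $lcm(r_1,r_2) = N$ --- a slightly longer but equally valid computation.
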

    \begin{proof}
        $a$ be $r_1$-primitive, $b$ be $r_2$-primitive in $\mathbb{F}_{q^n}$\\
        Then, $ord(a) = \dfrac{q^n-1}{r_1}$ and $ord(b) = \dfrac{q^n-1}{r_2}.$\\
        Since $gcd(ord(a), ord(b)) = 1, 
         ord(ab) = lcm(ord(a),ord(b)) = lcm(\dfrac{q^n-1}{r_1}, \dfrac{q^n-1}{r_2})=\dfrac{q^n-1}{gcd(r_1, r_2)}$ which implies that the element $ab$ is $gcd(r_1, r_2)$-primitive in $\mathbb{F}_{q^n}$.
    \end{proof}

    It is possible to identify a partition of the set of all $k$-normal elements \cite{RANI2_inverse}. In contrast, $t$-primitive elements, where $t$ is a multiple of $r$, can be used to partition the set of elements that are $r$-th powers in the case of $r$-primitive elements. An alternative proof for $\sum _{d \vert n} \phi(d) = n$ can be provided using the findings below.

    \begin{theorem}\label{Th1}
        For any divisor $r$ of $q^n-1$, let $\mathrm{B} = \{ \beta ^r ~\vert ~ \beta \in \mathbb{F}_{q^n}\}$ and $\mathrm{A_t} = \{\alpha \in \mathbb{F}_{q^n} ~\vert ~ord(\alpha) = \frac{q^n-1}{t}\} $, where $t$ is a multiple of $r$. Then \[\mathrm{B} = \bigcup_{r\vert t} \mathrm{A_t}.\]
    \end{theorem}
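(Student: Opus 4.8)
The plan is to pass to the cyclic multiplicative group $\mathbb{F}_{q^n}^*$ and translate both sides into statements about multiplicative orders. Write $N = q^n - 1$ and fix a generator $g$ of $\mathbb{F}_{q^n}^*$, so that every nonzero element is $g^k$ and $\mathrm{ord}(g^k) = N/\gcd(k,N)$. A preliminary bookkeeping point deserves mention: $0 = 0^r$ lies in $\mathrm{B}$ but in no $\mathrm{A}_t$ (elements of $\mathrm{A}_t$ carry a well-defined multiplicative order), so the asserted identity must be read on $\mathbb{F}_{q^n}^*$, with $\beta$ ranging over $\mathbb{F}_{q^n}^*$; I would state this restriction explicitly and then prove the equality of the corresponding subsets of $\mathbb{F}_{q^n}^*$.

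The first key step is to identify $\mathrm{B}$ as a subgroup and characterize it by order. Since $\mathbb{F}_{q^n}^*$ is cyclic of order $N$ and $r \mid N$, the $r$-th power map has kernel of order $\gcd(r,N)=r$, hence image equal to the unique subgroup of order $N/r$, which is exactly $\{\alpha : \alpha^{N/r} = 1\} = \{\alpha : \mathrm{ord}(\alpha) \mid N/r\}$. Thus $\alpha \in \mathrm{B}$ if and only if $\mathrm{ord}(\alpha) \mid N/r$.

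The second key step is the partition of $\mathbb{F}_{q^n}^*$ by order: every nonzero $\alpha$ lies in exactly one $\mathrm{A}_t$, namely for $t = N/\mathrm{ord}(\alpha)$, and $\mathrm{A}_t$ is nonempty precisely when $t \mid N$. The equality then reduces to the elementary divisibility equivalence, valid for $r,t \mid N$,
\[
\mathrm{ord}(\alpha) = \tfrac{N}{t} \ \text{divides}\ \tfrac{N}{r} \quad\Longleftrightarrow\quad r \mid t ,
\]
since $\tfrac{N}{t} \mid \tfrac{N}{r}$ forces the quotient $\tfrac{t}{r}$ to be a positive integer, and conversely $r\mid t$ gives $\tfrac{N}{r} = \tfrac{N}{t}\cdot\tfrac{t}{r}$. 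Combining the two steps yields a chain of equivalences: $\alpha \in \mathrm{B}$ iff $\mathrm{ord}(\alpha) \mid N/r$ iff the unique $t$ with $\alpha \in \mathrm{A}_t$ satisfies $r \mid t$ iff $\alpha \in \bigcup_{r \mid t} \mathrm{A}_t$, which delivers both inclusions simultaneously.

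I expect no deep obstacle here; the only real care lies in the divisibility bookkeeping, keeping straight that a larger index $t$ corresponds to a smaller order $N/t$, so that ``$r$ divides $t$'' matches ``order divides $N/r$,'' together with the explicit treatment of the zero element flagged above. As a bonus, comparing cardinalities within this partition gives $|\mathrm{B}| = N/r = \sum_{r\mid t,\, t\mid N}\phi(N/t) = \sum_{d \mid N/r}\phi(d)$, yielding the promised alternative derivation of the identity $\sum_{d\mid m}\phi(d)=m$ with $m = N/r$.
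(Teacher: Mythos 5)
Your proof is correct, and it takes a cleaner, more structural route than the paper's. The paper argues the two inclusions element-by-element: for $\bigcup_{r\mid t}A_t \subseteq B$ it writes a $t$-primitive element as $a=\gamma^t$ for a primitive $\gamma$ (an assertion it does not justify --- what is immediate is only $a\in\langle\gamma^t\rangle$, i.e.\ $a=\gamma^{tm}$, though that already suffices since $a=(\gamma^{km})^r$ with $t=kr$), and for $B\subseteq\bigcup_{r\mid t}A_t$ it computes $a^{(q^n-1)/r}=(\beta^r)^{(q^n-1)/r}=1$ and deduces $\frac{q^n-1}{t}\mid\frac{q^n-1}{r}$, hence $r\mid t$. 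Your reverse-direction reasoning is the same divisibility computation, but your forward direction replaces the paper's unproved ``$t$-primitive elements are $t$-th powers of primitive elements'' by the subgroup characterization: the image of the $r$-th power map is the unique subgroup of order $(q^n-1)/r$, namely $\{\alpha : \operatorname{ord}(\alpha)\mid (q^n-1)/r\}$, after which both inclusions collapse into one chain of equivalences. This buys rigor at the one soft spot in the paper's argument, and it effectively merges the theorem with the paper's subsequent proposition computing $|B|=(q^n-1)/r$ (which uses exactly your kernel/first-isomorphism-theorem argument), so the alternative derivation of $\sum_{d\mid m}\phi(d)=m$ comes out in the same breath. Your bookkeeping remark about $0$ is also a genuine catch: as stated, $\beta$ ranges over all of $\mathbb{F}_{q^n}$, so $0=0^r\in B$ yet $0$ lies in no $A_t$; the identity is true only on $\mathbb{F}_{q^n}^*$, and indeed the paper silently switches to $\mathbb{F}_{q^n}^*$ in the very next proposition.
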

    \begin{proof}
        $\text{Let }a \in  \displaystyle\bigcup_{r\vert t}\mathrm{A_t},\text{ then } a \in \mathrm{A_t} \text{ for some multiple } t \text{ of } r.$
        According to the definition of $\mathrm{A_t}$, $ord(a)$ = $\dfrac{q^n-1}{t}$ where $t \vert q^n-1$ and $t = kr$ for some $k \in \mathbb{N}$. Since $a$ is $t$-primitive, $a = \gamma ^ t$, where $\gamma$ is a primitive element. 
        i.e. $a = \gamma ^ {kr} = (\gamma ^k)^r = \beta ^r \text{, where } \beta = \gamma ^k \in \mathbb{F}_{q^n}.$ Thus, $a\in B.$\\
        Let $a\in \mathrm{B},$ then $a= \beta ^r$ for some $r$.\\
        Let $a$ be $t$-primitive for some $t \vert q^n - 1$. \\
        Consider, $a^{\frac{q^n-1}{r}} = (\beta ^r)^{\frac{q^n-1}{r}} = 1.$ Since the $ord(a)$ is $\dfrac{q^n-1}{t}$ we get, $\dfrac{q^n-1}{t}\vert \dfrac{q^n-1}{r} $ which shows that $t$ is a multiple of $r$. Thus
        $ a \in A_t$, which implies $ a \in  \displaystyle\bigcup_{r\vert t}\mathrm{A_t}$.\\
          $\text{Thus, }\mathrm{B} =  \displaystyle\bigcup_{r\vert t} \mathrm{A_t}.$
    \end{proof}
    The number of elements in the different sets defined in the theorem \ref{Th1} is provided by the next observation and the proposition.
    \begin{observation}
        \begin{enumerate}
            \item $ \displaystyle\bigcup_{r\vert t} \mathrm{A_t}$ forms a partition of $B.$
            \item $|A_t| = \phi(\frac{q^n - 1}{t})$, for any $t$.
            \item   $|B| =  \displaystyle\sum_{r\vert t} \phi(\frac{q^n - 1}{t})$ \label{obs3}
        \end{enumerate}
    \end{observation}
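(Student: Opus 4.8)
The plan is to bootstrap the set equality $B = \bigcup_{r \mid t} A_t$ supplied by Theorem~\ref{Th1} into the three counting claims, so essentially no new structural work is needed. For part~(1), I would upgrade that equality to a genuine partition by establishing pairwise disjointness of the blocks. The key point is that $\mathbb{F}_{q^n}^*$ is a group, so every element has a single well-defined multiplicative order; hence if $\alpha \in A_{t_1} \cap A_{t_2}$ then $\frac{q^n-1}{t_1} = \mathrm{ord}(\alpha) = \frac{q^n-1}{t_2}$, which forces $t_1 = t_2$. Thus distinct admissible indices $t$ (those with $r \mid t$ and $t \mid q^n-1$) give disjoint blocks, and together with Theorem~\ref{Th1} this shows $\{A_t : r \mid t\}$ partitions $B$. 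I would also note that each block is nonempty, since part~(2) gives $|A_t| = \phi\!\left(\frac{q^n-1}{t}\right) \geq 1$, so no degenerate empty parts arise.

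For part~(2), I would invoke the standard structure theory of the cyclic group $\mathbb{F}_{q^n}^*$ of order $q^n-1$: for every divisor $d$ of $q^n-1$ there are exactly $\phi(d)$ elements of order $d$. Since $t \mid q^n-1$, the quantity $d = \frac{q^n-1}{t}$ is such a divisor, and therefore $|A_t| = \phi\!\left(\frac{q^n-1}{t}\right)$. It is worth emphasizing that this count holds for every admissible $t$ and does not depend on the extra hypothesis $r \mid t$; that divisibility condition only selects which blocks appear in the union defining $B$.

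Part~(3) then follows immediately from additivity of cardinality over a disjoint union: combining (1) and (2) yields $|B| = \sum_{r \mid t} |A_t| = \sum_{r \mid t} \phi\!\left(\frac{q^n-1}{t}\right)$. None of these steps is genuinely hard; the only place demanding care is the bookkeeping around the zero element. Read literally, $B = \{\beta^r : \beta \in \mathbb{F}_{q^n}\}$ contains $0$, which lies in no $A_t$, so I would either restrict attention to $\mathbb{F}_{q^n}^*$ throughout or handle $0$ as a separate trivial case, matching the convention already implicit in Theorem~\ref{Th1}. Finally, as a sanity check and to deliver the promised alternative proof of $\sum_{d \mid N} \phi(d) = N$, I would specialize to $r = 1$: then $t$ ranges over all divisors of $q^n-1$, $B = \mathbb{F}_{q^n}^*$ so $|B| = q^n-1$, and reindexing via $d = \frac{q^n-1}{t}$ turns part~(3) into $q^n-1 = \sum_{d \mid q^n-1} \phi(d)$.
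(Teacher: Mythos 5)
Your proof is correct and takes essentially the approach the paper intends: the observation is stated there without proof, being treated as immediate from Theorem~\ref{Th1} together with uniqueness of multiplicative order (disjointness), the standard fact that a cyclic group of order $q^n-1$ has exactly $\phi(d)$ elements of each order $d \mid q^n-1$, and additivity of cardinality over a disjoint union --- exactly the ingredients you supply. Your side remark about the zero element is a valid tightening of a detail the paper glosses over, since Theorem~\ref{Th1} writes $\beta \in \mathbb{F}_{q^n}$ rather than $\beta \in \mathbb{F}_{q^n}^*$, so strictly $0 \in \mathrm{B}$ while $0$ lies in no $\mathrm{A_t}$.
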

    \begin{proposition}
        Let $r\vert q^n - 1$ and $\mathrm{B} = \{ \beta ^r ~\vert ~ \beta \in \mathbb{F}^*_{q^n}\}$, then $|\mathrm{B}| = \dfrac{q^n-1}{r}.$
    \end{proposition}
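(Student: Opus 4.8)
The plan is to exploit the cyclic structure of $\mathbb{F}_{q^n}^*$ directly. Fix a primitive element $\gamma$, so that $\mathbb{F}_{q^n}^* = \langle \gamma \rangle$ is cyclic of order $q^n - 1$. The $r$-th power map $\psi \colon \mathbb{F}_{q^n}^* \to \mathbb{F}_{q^n}^*$ given by $\psi(\beta) = \beta^r$ is a group endomorphism whose image is precisely $\mathrm{B}$. Thus, by the first isomorphism theorem, $|\mathrm{B}| = |\mathbb{F}_{q^n}^*| / |\ker \psi| = (q^n-1)/|\ker\psi|$, and it suffices to count $\ker \psi$.

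First I would identify $\ker \psi$ with the group of $r$-th roots of unity, $\{\beta \in \mathbb{F}_{q^n}^* : \beta^r = 1\}$. Writing $\beta = \gamma^k$, the condition $\beta^r = 1$ becomes $\gamma^{rk} = 1$, i.e. $(q^n - 1) \mid rk$. Because $r \mid q^n - 1$ by hypothesis, this is equivalent to $\frac{q^n-1}{r} \mid k$, which has exactly $r$ incongruent solutions modulo $q^n - 1$, namely $k = j \cdot \frac{q^n-1}{r}$ for $0 \le j < r$. Hence $|\ker \psi| = r$, and $|\mathrm{B}| = (q^n-1)/r$ follows at once.

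There is no genuine obstacle here; the only point requiring care is the kernel count, which hinges on the divisibility $r \mid q^n - 1$ (without it the correct size would be $\gcd(r, q^n-1)$ rather than $r$). Once this proposition is established independently by the cyclic-group argument above, combining it with Observation~\ref{obs3} yields the arithmetic identity the surrounding text promises: the disjoint union $\mathrm{B} = \bigcup_{r \mid t} \mathrm{A}_t$ gives $\frac{q^n-1}{r} = |\mathrm{B}| = \sum_{r \mid t \mid q^n-1} \phi\!\left(\frac{q^n-1}{t}\right)$, and the substitution $s = \frac{q^n-1}{t}$ makes $s$ range exactly over the divisors of $\frac{q^n-1}{r}$, rewriting the right-hand side as $\sum_{s \mid (q^n-1)/r} \phi(s)$. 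Reading the resulting equality $\sum_{s \mid m} \phi(s) = m$ with $m = \frac{q^n-1}{r}$ in reverse recovers the classical identity $\sum_{d \mid n} \phi(d) = n$.
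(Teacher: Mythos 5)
Your proof is correct and takes essentially the same route as the paper: both apply the first isomorphism theorem to the $r$-th power endomorphism $\beta \mapsto \beta^r$ of $\mathbb{F}_{q^n}^*$, identify the image with $\mathrm{B}$, and reduce everything to showing $|\ker| = r$. The only (minor) difference is in that kernel count --- you write elements as powers of a fixed primitive element and count exponents $k$ with $\frac{q^n-1}{r} \mid k$, whereas the paper counts the roots of $x^r - 1$, which splits completely with distinct roots in $\mathbb{F}_{q^n}$ because $r \mid q^n - 1$; both arguments are valid and of comparable length.
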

    \begin{proof}
        Let $\sigma: \mathbb{F}^*_{q^n} \longrightarrow \mathbb{F}^*_{q^n}$ be defined by $\sigma(x) = x^r.$ \\
        Then, $\sigma$ is a homomorphism.\\
        Infact, for any $x,y \in \mathbb{F}^*_{q^n}, ~\sigma(xy) = (xy)^r = x^ry^r = \sigma(x)\sigma(y).$\\
        Now, $Ker~\sigma = \{ x \in \mathbb{F}^*_{q^n} ~|~  x^r = 1\}$ and $Im ~ \sigma = \{ x^r ~\vert ~ x \in \mathbb{F}^*_{q^n}\} = \mathrm{B}.$\\
        Since $r\vert q^n-1, x^r-1$ splits completely in $\mathbb{F}^*_{q^n}$ and 
        $x^r - 1$ has $r$ distinct roots in $\mathbb{F}^*_{q^n}$, as $x^r - 1$ and its derivative have no common roots.\\
        Thus, $|Ker~\sigma| = r$.\\
        By First Isomorphism Theorem, $\mathbb{F}^*_{q^n} / Ker~\sigma \cong B $, Thus, $ |B| = \dfrac{q^n-1}{r}.$ 
    \end{proof}    
    For any divisor $r$ of $q^n - 1$, let $\dfrac{q^n-1}{r} = k,$ $r \vert t $ is equivalent to $ l=\displaystyle\frac{q^n-1}{t} \mid \frac{q^n-1}{r}.$ Then the set $\mathrm{B}$ defined in previous proposition, have cardinality $ k.$ By observation \ref{obs3}, we get $|\mathrm{B}| =  \displaystyle\sum_{r\vert t} \phi(\frac{q^n - 1}{t}) = \frac{q^n-1}{r} $ which gives the proof of $ \displaystyle\sum_{l\vert k} \phi(l) = k$.

    The $\mathbb{F}_q$-Order also exhibits behavior analogous to that of classical orders. In particular, the set of all elements whose $\mathbb{F}_q$-Order is a multiple of $f$ induces a similar partition structure.
    \begin{proposition}
For any divisor $f$ of $x^{n}-1$, let
$$
B = \{\, a \in \mathbb{F}_{q^{n}}^{*} : f \mid \operatorname{Ord}(a) \,\},
$$
and for every multiple $g$ of $f$, define
$$
A_{g} = \left\{\, \alpha \in \mathbb{F}_{q^{n}}^{*} \ \middle|\ 
\operatorname{Ord}(\alpha)=\frac{x^{n}-1}{g} \right\}.
$$
Then
$$
B = \bigcup_{f \mid g} A_{g}.
$$
\end{proposition}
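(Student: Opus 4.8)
The plan is to mirror the proof of Theorem~\ref{Th1}, replacing the multiplicative $r$-th power map by its additive counterpart, the $q$-associate of $f$. Write $F = f \circ x$ for the linearized polynomial attached to $f$, and recall that every $a \in \mathbb{F}_{q^n}$ satisfies $\operatorname{Ord}(a) \mid x^n-1$ (Section~\ref{sec2}); hence $\operatorname{Ord}(a) = \frac{x^n-1}{g}$ for a unique monic $g \mid x^n-1$, and the assignment $a \mapsto g = \frac{x^n-1}{\operatorname{Ord}(a)}$ is exactly the additive analogue of the index $t = \frac{q^n-1}{\operatorname{ord}(a)}$ used in Theorem~\ref{Th1}. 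This dictionary converts the claimed set equality into a statement about which indices $g$ occur as $a$ ranges over $B$, so the task reduces to pinning down, for a given $a$, when the divisibility defining $B$ is equivalent to the index condition $f \mid g$ defining the union.

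For the inclusion $\bigcup_{f\mid g} A_g \subseteq B$, I would fix $a \in A_g$ with $f \mid g$, so that $\operatorname{Ord}(a) = \frac{x^n-1}{g}$, and observe via Proposition~\ref{1} that $a$ lies in the image of the $q$-associate $F$; this produces a preimage $\beta$ with $a = F(\beta)$, playing the role of $a=\beta^{r}$ in Theorem~\ref{Th1}, and membership $a \in B$ would then be read off by transporting the divisibility $f \mid g$ through the relation between $\operatorname{Ord}(a)$ and $g$. For the reverse inclusion $B \subseteq \bigcup_{f\mid g}A_g$, I would take $a \in B$, set $g = \frac{x^n-1}{\operatorname{Ord}(a)}$ so that $a \in A_g$ by construction, and show that the defining divisibility of $B$ forces $f \mid g$. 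Here Corollary~\ref{fdivx} and the $M$-free characterization of the Definition in Section~\ref{sec2} supply the structural input, exactly as the kernel/image count $|\operatorname{Ker}\sigma|=r$ does in the multiplicative setting.

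The main obstacle, and the step deserving the most care, is the divisibility bookkeeping linking the membership condition defining $B$ to the index condition $f \mid g$. Concretely, one must reconcile the two \emph{a priori} distinct conditions $f \mid \operatorname{Ord}(a)$ and $f \mid \frac{x^n-1}{\operatorname{Ord}(a)}$: the $q$-associate computation naturally delivers $\operatorname{Ord}(a) \mid \frac{x^n-1}{f}$, which is equivalent to $f \mid g$ since $f \mid g \iff \frac{x^n-1}{g} \mid \frac{x^n-1}{f}$ for divisors of $x^n-1$. The decisive point is therefore to verify that, under the standing hypothesis $f \mid x^n-1$, this is indeed the divisibility that governs membership in $B$; I expect this translation between the condition stated on $\operatorname{Ord}(a)$ and its complementary divisor $\frac{x^n-1}{\operatorname{Ord}(a)}$ — rather than either inclusion in isolation — to be the crux, just as the passage from $a^{(q^n-1)/r}=1$ to $r \mid t$ is the decisive move in the proof of Theorem~\ref{Th1}.
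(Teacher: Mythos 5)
Your overall strategy --- transporting the proof of Theorem~\ref{Th1} through the dictionary ``$r$-th power map $\leftrightarrow$ $q$-associate of $f$,'' with $g=\frac{x^n-1}{\operatorname{Ord}(a)}$ playing the role of the index $t$ --- is exactly the paper's, and your reverse-inclusion computation ($\frac{x^n-1}{f}\circ f(\beta)=(x^n-1)\circ\beta=0$, hence $\operatorname{Ord}(a)\mid\frac{x^n-1}{f}$, hence $f\mid g$) is the one the paper performs. But two points remain genuinely open in your plan. First, in the forward inclusion you invoke Proposition~\ref{1} to produce a preimage $\beta$ with $a=F(\beta)$; Proposition~\ref{1} is a statement about divisibility among polynomials ($f\mid g\circ x$ iff $\operatorname{Ord}(f)\mid g$) and does not place an element of prescribed $\mathbb{F}_q$-Order in the image of a linearized map. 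The step actually needed is the additive analogue of ``every $t$-primitive element is $\gamma^t$ for a primitive $\gamma$'': since $\mathbb{F}_{q^n}$ is a cyclic $\mathbb{F}_q[x]$-module generated by any normal element $b$, an element of $\mathbb{F}_q$-Order $\frac{x^n-1}{g}$ can be written as $a=g(b)$ with $b$ normal, whence $g=hf$ gives $a=f(h(b))\in B$. That is precisely how the paper argues, and it is the ingredient your proposal lacks; Corollary~\ref{fdivx} and $M$-freeness do not substitute for it.

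Second, the ``reconciliation'' you defer --- verifying that membership $f\mid\operatorname{Ord}(a)$ is governed by the divisibility $\operatorname{Ord}(a)\mid\frac{x^n-1}{f}$ --- cannot be carried out, because the two conditions are inequivalent: for $f=x^n-1$ the first describes the normal elements while the second forces $a=0$; already for $q=2$, $n=2$, $f=x+1$, the first gives all of $\mathbb{F}_4^*$ while $\bigcup_{f\mid g}A_g=\{1\}$. You have in fact detected a flaw in the statement rather than a step you can complete: the proposition is true (and is what the paper proves) only with $B$ taken to be the image of the $q$-associate map, $B=\{f(\beta):\beta\in\mathbb{F}_{q^n}^*\}$ --- the exact analogue of $B=\{\beta^r\}$ in Theorem~\ref{Th1}, equivalently $\{a:\operatorname{Ord}(a)\mid\frac{x^n-1}{f}\}$ --- not $\{a: f\mid\operatorname{Ord}(a)\}$. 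The paper's own proof tacitly makes this substitution, using ``$a=f(b)$'' as the membership criterion for $B$ in both directions. To complete your argument, replace the stated definition of $B$ by the image formulation and supply the normal-element generation step above; as written, the proposal neither establishes the forward inclusion nor can close the divisibility translation it correctly identifies as the crux.
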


\begin{proof}
Suppose $\displaystyle a \in \bigcup_{f \mid g} A_{g}$.  
Then $a \in A_{g}$ for some multiple $g$ of $f$, and therefore $\operatorname{Ord}(a) = \dfrac{x^{n}-1}{g}.$

There exists a normal element $b \in \mathbb{F}_{q^{n}}^{*}$ such that $a = g(b)$.  
Since $g$ is a multiple of $f$, write $g = hf$ for some polynomial $h$.  
Thus
$$
a = h f(b) = f(h(b)) = f(c),
$$
where $c = h(b) \in \mathbb{F}_{q^{n}}^{*}$.  
Hence $a \in B$, giving
$$
\bigcup_{f \mid g} A_{g} \subseteq B.
$$

Conversely, assume $a \in B$.  
Then $a = f(b)$ for some $b \in \mathbb{F}_{q^{n}}^{*}$ and
$$
\operatorname{Ord}(a) = \frac{x^{n}-1}{g}.
$$

Now consider
$$
\frac{x^{n}-1}{f} \circ a
    = \frac{x^{n}-1}{f} \circ (f(b))=
    (x^{n}-1)\circ (b)
    = 0,
$$
Since $\operatorname{Ord}(a) = \dfrac{x^{n}-1}{g}$,
$$
\frac{x^{n}-1}{g} \mid \frac{x^{n}-1}{f}
    \quad\Longrightarrow\quad f \mid g.
$$

Hence $a \in A_{g}$, which implies
$$
B \subseteq \bigcup_{f \mid g} A_{g}.
$$
Thus, $B = \displaystyle \bigcup_{f \mid g} A_{g}$.
\end{proof}
\begin{proposition}
    Let $f \vert x^n-1$ and $B = \{\, a \in \mathbb{F}_{q^{n}}^{*} : f \mid \operatorname{Ord}(a) \,\}$, then $|B| = q^{n-deg f}$.
\end{proposition}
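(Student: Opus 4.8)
The plan is to recast $B$ as the image of an $\mathbb{F}_q$-linear map and then count by the first isomorphism theorem, in direct analogy with the multiplicative proposition proved earlier (where the power map $x \mapsto x^r$ was used). Write $m = \deg f$ and let $T \colon \mathbb{F}_{q^n} \to \mathbb{F}_{q^n}$ be defined by $T(b) = f \circ b$, the evaluation at $b$ of the linearized ($q$-)polynomial $f \circ x$. Since $q$-polynomials act $\mathbb{F}_q$-linearly, $T$ is an $\mathbb{F}_q$-linear endomorphism of $\mathbb{F}_{q^n}$, viewed as an $n$-dimensional $\mathbb{F}_q$-vector space. The preceding proposition (specifically its proof) supplies the key identification: $f \mid \operatorname{Ord}(a)$ holds precisely when $a = f \circ b$ for some $b$, so $B$ coincides with $\operatorname{Im} T$. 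Hence it suffices to determine $|\operatorname{Im} T|$.

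First I would count $\ker T$, the set of roots of $f \circ x$ lying in $\mathbb{F}_{q^n}$. Because $f \mid x^n - 1$, the divisibility is preserved under the $\circ$-operation (as in Proposition~\ref{1} and its corollary), giving $f \circ x \mid (x^n - 1)\circ x = x^{q^n} - x$. The polynomial $x^{q^n} - x$ is separable and splits completely over $\mathbb{F}_{q^n}$, its $q^n$ roots being exactly the elements of $\mathbb{F}_{q^n}$. Consequently $f \circ x$ is itself separable and all of its roots lie in $\mathbb{F}_{q^n}$; since $\deg(f \circ x) = q^m$, it has exactly $q^m$ distinct roots there. Therefore $|\ker T| = q^m = q^{\deg f}$. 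This is the additive counterpart of the step ``$x^r - 1$ has $r$ distinct roots in $\mathbb{F}_{q^n}$'' in the earlier argument, and it is the part I expect to require the most care.

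Finally I would apply the first isomorphism theorem for $\mathbb{F}_q$-linear maps (equivalently rank--nullity): $\mathbb{F}_{q^n}/\ker T \cong \operatorname{Im} T$ yields $|\operatorname{Im} T| = |\mathbb{F}_{q^n}|/|\ker T| = q^n / q^{\deg f} = q^{\,n - \deg f}$, and therefore $|B| = q^{\,n - \deg f}$. The one point demanding attention in this last line is the status of $0$: since $\operatorname{Im} T$ is a subspace it contains $0$, whereas $B$ was defined inside $\mathbb{F}_{q^n}^{*}$, so one should either count the full image subspace $\operatorname{Im} T$ (which has exactly $q^{\,n-\deg f}$ elements) or reconcile the trivial element explicitly. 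Apart from the kernel count, every step is routine linear algebra, so the separability-and-splitting argument establishing $|\ker T| = q^{\deg f}$ is the main obstacle.
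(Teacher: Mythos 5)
Your proposal is correct and takes essentially the same route as the paper's proof: the paper likewise realizes $B$ as the image of the $\mathbb{F}_q$-linear map $\sigma_f(b)=f(b)$, counts the kernel (the elements whose $\mathbb{F}_q$-Order divides $f$), and applies the first isomorphism theorem to conclude $|B| = q^{n}/q^{\deg f} = q^{\,n-\deg f}$. If anything, your version is tighter: your separability argument giving $|\ker T| = q^{\deg f}$ is cleaner than the paper's kernel count, and your closing remark that $0 \in \operatorname{Im} T$ while $B$ is defined inside $\mathbb{F}_{q^n}^{*}$ flags a mismatch (between the written membership condition $f \mid \operatorname{Ord}(a)$ and the image of the map actually being counted) that the paper passes over silently.
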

\begin{proof}
     Let $\sigma_f : \mathbb{F}_{q^{n}} \rightarrow \mathbb{F}_{q^{n}}$   defined by $\sigma_f(x)=f(x)$.
 Then $\sigma_f$ is a homomorphism. Infact, for any $a,b \in \mathbb{F}_{q^{n}}$,
\[
\sigma_f(a+b)=f(a+b)=f(a)+f(b)=\sigma_f(a)+\sigma_f(b).
\]

Consider  
$Ker \sigma_f = \{\alpha \in \mathbb{F}_{q^{n}} : f(\alpha)=0\}$,  
the set of all elements whose $\mathbb{F}_{q}$-Order divides $f$.

Thus,
\[
|Ker \sigma_f|
 = \sum_{h \mid f} \phi(h)\, q^{\deg(f)-\deg(h)}.
\]

The image of $\sigma_f$ is  
$\operatorname{Im}(\sigma_f)=\{f(\alpha):\alpha\in\mathbb{F}_{q^{n}}\}=B$.

By the first isomorphism theorem,
\[
\frac{\mathbb{F}_{q^{n}}}{\ker \sigma_f}
\cong \operatorname{Im}(\sigma_f)=B.
\]

Therefore,
\[
|B|=\frac{q^{n}}{|\ker\sigma_f|}
   =q^{\,n-\deg(f)}.
\]

\end{proof}

    Using the above results, one obtains an alternative proof of the identity
\[
\sum_{g \mid f} \phi(g) = q^{\deg(f)}.
\]

    The study of $r$-primitive and $k$-normal elements heavily relies on $m$-free and $g$-free elements respectively. The number of $m$-free elements can be found in the following comment. This serves as motivation for calculating the number of $g$-free elements.

    \begin{remark}
        Number of $m$-free elements over $\mathbb{F}_{q^n}$ is $\dfrac{\phi(m)(q^n-1)}{m}$.
    \end{remark}
    \begin{proposition}
        Number of $g$-free elements over a finite field $\mathbb{F}_{q^n}$ is $\dfrac{\phi(g)q^n}{q^{deg(g)}}$.
    \end{proposition}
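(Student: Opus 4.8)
The plan is to work inside the $\mathbb{F}_q[x]$-module structure on $\mathbb{F}_{q^n}$ in which $x$ acts as the Frobenius $\sigma_q$, so that for a polynomial $h$ with $q$-associate $H$ one has $H(\beta) = h(\sigma_q)(\beta)$; writing $h\cdot\beta$ for this action and setting $M=\mathbb{F}_{q^n}$, the set $\{H(\beta):\beta\in\mathbb{F}_{q^n}\}$ is exactly the image $hM$ of multiplication by $h$. In this language, an element $\alpha$ fails to be $g$-free precisely when $\alpha\in hM$ for some non-constant monic divisor $h$ of $g$. First I would reduce this to the irreducible divisors: if $p_1,\dots,p_k$ are the distinct monic irreducible factors of $g$, then since $p\mid h$ forces $hM\subseteq pM$, an element is $g$-free if and only if it avoids every $p_iM$. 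The number of $g$-free elements is therefore $\bigl|\,\mathbb{F}_{q^n}\setminus\bigcup_{i=1}^{k}p_iM\,\bigr|$, to which I would apply inclusion--exclusion.

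The two ingredients for the inclusion--exclusion are the sizes of the sets $p_iM$ and of their intersections. For a single factor, the preceding proposition already gives $|hM|=q^{\,n-\deg h}$ for any $h\mid x^n-1$, since $hM$ is the image of the additive homomorphism $\sigma_h$, whose kernel has size $q^{\deg h}$. For the intersections, the key step is the identity $\bigcap_{i\in S}p_iM=\bigl(\prod_{i\in S}p_i\bigr)M$ for every $S\subseteq\{1,\dots,k\}$. Working in $M\cong\mathbb{F}_q[x]/(x^n-1)$ and using that each $p_i$ divides $x^n-1$, one has $p_iM=\{\bar u:p_i\mid u\}$, so membership $\bar u\in p_iM$ is equivalent to $p_i\mid u$; since the $p_i$ are pairwise coprime, $p_i\mid u$ for all $i\in S$ is equivalent to $\bigl(\prod_{i\in S}p_i\bigr)\mid u$, which is exactly $\bar u\in\bigl(\prod_{i\in S}p_i\bigr)M$. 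Because $\prod_{i\in S}p_i$ is a product of distinct irreducible factors of $x^n-1$, its image has size $q^{\,n-\sum_{i\in S}\deg p_i}$.

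Combining these, inclusion--exclusion yields
\[
\#\{g\text{-free}\}=\sum_{S\subseteq\{1,\dots,k\}}(-1)^{|S|}\,q^{\,n-\sum_{i\in S}\deg p_i}
=q^{n}\prod_{i=1}^{k}\bigl(1-q^{-\deg p_i}\bigr).
\]
The final step is to recognize the product as $\phi(g)/q^{\deg g}$: for the polynomial Euler totient one has $\phi(g)=q^{\deg g}\prod_{p\mid g}(1-q^{-\deg p})$ over the distinct monic irreducible divisors $p$ of $g$, whence $\prod_i(1-q^{-\deg p_i})=\phi(g)/q^{\deg g}$ and the count becomes $\phi(g)\,q^{n}/q^{\deg g}$, as claimed. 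I expect the main obstacle to be the intersection identity $\bigcap_{i\in S}p_iM=\bigl(\prod_{i\in S}p_i\bigr)M$, since this is the step that genuinely uses the coprimality of the irreducible factors together with the hypothesis $g\mid x^n-1$; once it is established, the size formula for $hM$ and the totient identity make the remainder routine.
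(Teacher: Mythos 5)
Your proof is correct, and it takes a genuinely different route from the paper's. The paper argues via $\mathbb{F}_q$-Orders: it invokes the characterization that $\alpha$ is $g$-free if and only if $\gcd\bigl(\tfrac{x^n-1}{Ord(\alpha)},\, g\bigr)=1$, writes $x^n-1=(f_1\cdots f_r)^a$ and $g=\prod_{i\le k}f_i^{b_i}$, observes that the admissible orders are exactly $f_1^a\cdots f_k^a h$ with $h\mid f_{k+1}^a\cdots f_r^a$, and then sums $\phi$ over these orders, leaving the closing simplification as ``further calculations.'' You instead work with the images $p_iM$ of the distinct irreducible divisors of $g$ under the Frobenius-module action, reduce $g$-freeness to avoiding $\bigcup_i p_iM$, and run inclusion--exclusion, where the two inputs are the size formula $|hM|=q^{n-\deg h}$ (which is exactly the paper's preceding proposition) and the intersection identity $\bigcap_{i\in S}p_iM=\bigl(\prod_{i\in S}p_i\bigr)M$, proved via the cyclic-module isomorphism $\mathbb{F}_{q^n}\cong\mathbb{F}_q[x]/(x^n-1)$ together with $p_i\mid x^n-1$ and pairwise coprimality. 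What each buys: the paper's route leans on the order-counting machinery ($\#\{\alpha: Ord(\alpha)=f\}=\phi(f)$) developed earlier and parallels the classical proof for multiplicative-order freeness, but its endgame is only sketched; your route is self-contained modulo the previous proposition, carries the computation to completion (the product $q^n\prod_i(1-q^{-\deg p_i})=\phi(g)q^n/q^{\deg g}$ falls out of the inclusion--exclusion with no separate enumeration of orders), and makes explicit the structural fact, implicit in the paper's gcd criterion, that $g$-freeness depends only on the radical of $g$. Both arguments rest, ultimately, on the normal basis theorem: yours through the isomorphism $\mathbb{F}_{q^n}\cong\mathbb{F}_q[x]/(x^n-1)$, the paper's through the fact that $\phi(f)$ counts the elements of $\mathbb{F}_q$-Order $f$.
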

    \begin{proof}
        Let $x^n-1 = (\displaystyle \prod_{i=1}^r f_i(x))^a$ and $g=\displaystyle \prod_{i=1}^k (f_i(x))^{b_i}$ where each $f_i$ is the irreducible polynomial of degree $n_i$ and $k \leq r$. It is known that $\alpha \in \mathbb{F}_{q^n}$ is $g$-free if and only if $\gcd \left( \dfrac{x^n-1}{Ord(\alpha)}, g \right) = 1$. Thus $Ord(\alpha)$ is of the form $f_1^af_2^a \dots f_k^a h$, where $h$ is a divisor of $f_{k+1}^a \dots f_r^a$.\\
        Thus, the number of $g$-free elements is equals to, 
        \begin{align*}
            &= \sum_{h \vert f_{k+1}^a \dots f_r^a} \phi(f_1^af_2^a \dots f_k^a h) \\
            &= \phi(f_1^af_2^a \dots f_k^a) \sum_{h \vert f_{k+1}^a \dots f_r^a} \phi(h) \\
            & = \phi(f_1^af_2^a \dots f_k^a) \sum_{i_{k+1} = 0}^a \sum_{i_{k+2} = 0}^a \dots \sum_{i_{r} = 0}^a  \phi(f_{k+1}^{i_{k+1}}f_{k+2}^{i_{k+2}}\dots f_r^{i_r})
        \end{align*}
        Further calculations will lead to give the number of $g$-free elements to be $\dfrac{\phi(g)q^n}{q^{deg(g)}}$.
        
    \end{proof}
    If $\alpha$ is $r$-primitive, then in any field $\mathbb{F}_{q^n}$, its inverse will also be $r$-primitive. However, this need not always occur when $k$-normal is involved. It happens only in the fields that are quadratic extensions or for $k = n-1$. 

    \begin{proposition}
        In any field $\mathbb{F}_{q^n}$, if $\alpha$ is $(n-1)$-normal, then $\alpha^{-1}$ is $(n-1)$-normal.
    \end{proposition}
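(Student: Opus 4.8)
The plan is to exploit the fact that an $(n-1)$-normal element has $\mathbb{F}_q$-Order of the smallest possible positive degree, namely degree one. This collapses the abstract Frobenius condition into a plain multiplicative relation that is manifestly stable under inversion. First I would unpack the definition: since $\alpha$ is $(n-1)$-normal, the $k$-normal characterization gives $\deg(Ord(\alpha)) = n-(n-1) = 1$, so $Ord(\alpha) = x - c$ for some $c \in \mathbb{F}_q$. The defining condition $(x-c)(\sigma_q)(\alpha) = 0$ then reads $\alpha^q = c\alpha$. Because $\alpha \in \mathbb{F}_{q^n}^{*}$ we have $\alpha \neq 0$, so $c = \alpha^{q-1} \in \mathbb{F}_q^{*}$; in particular $c$ is nonzero and $c^{-1} \in \mathbb{F}_q^{*}$ is again an element of the ground field.

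Next I would transfer the relation to the inverse by a one-line computation. Applying the Frobenius to $\alpha^{-1}$ and using $\alpha^q = c\alpha$ gives
\[
(\alpha^{-1})^q = (\alpha^q)^{-1} = (c\alpha)^{-1} = c^{-1}\alpha^{-1}.
\]
Hence $(x - c^{-1})(\sigma_q)(\alpha^{-1}) = 0$, which shows that the polynomial $x - c^{-1} \in \mathbb{F}_q[x]$ annihilates $\alpha^{-1}$ under the Frobenius action. By the defining minimality of the $\mathbb{F}_q$-Order (equivalently, by the polynomial analog in Proposition~\ref{1} applied to the annihilator ideal), it follows that $Ord(\alpha^{-1})$ divides $x - c^{-1}$.

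Finally I would pin down the degree. Since $\alpha^{-1} \neq 0$, its $\mathbb{F}_q$-Order cannot be the constant polynomial $1$, so $\deg(Ord(\alpha^{-1})) \geq 1$. Combined with $Ord(\alpha^{-1}) \mid (x - c^{-1})$ and monicity of both polynomials, this forces $Ord(\alpha^{-1}) = x - c^{-1}$. Therefore $\deg(Ord(\alpha^{-1})) = 1 = n - (n-1)$, and $\alpha^{-1}$ is $(n-1)$-normal, as claimed.

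There is essentially no deep obstacle in this argument; the single point that genuinely requires care is verifying that the scalar $c$, and hence $c^{-1}$, lies in $\mathbb{F}_q^{*}$ rather than in some proper intermediate or larger field — this is precisely what the degree-one condition $\deg(Ord(\alpha)) = 1$ guarantees, since $Ord(\alpha) \in \mathbb{F}_q[x]$ by definition. Once that is in hand, the entire proposition reduces to the observation that the degree-one eigenvalue-type relation $\alpha^q = c\alpha$ inverts to $(\alpha^{-1})^q = c^{-1}\alpha^{-1}$, which preserves the degree of the $\mathbb{F}_q$-Order.
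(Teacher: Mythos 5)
Your proof is correct and follows essentially the same route as the paper: both reduce the $(n-1)$-normal hypothesis to the degree-one relation $\alpha^q = c\alpha$ (the paper writes $Ord(\alpha)=x+a$ and passes to the reciprocal polynomial $f^*(x)=xa^{-1}f(1/x)=x+a^{-1}$, which is the same computation), then check that $x-c^{-1}$ annihilates $\alpha^{-1}$. Your write-up is in fact a bit more careful than the paper's, since you explicitly justify that $c\neq 0$ and that $Ord(\alpha^{-1})$ cannot be constant, steps the paper leaves implicit.
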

    \begin{proof}
        Let $\alpha$ be $(n-1)$-normal, then $\mathbb{F}_q$ order of $\alpha$ is $f=x+a$ for some $a \in \mathbb{F}_q$. Which implies $(x+a) \circ \alpha = 0 \implies \alpha ^ q + a \alpha = 0 $. Thus, $ \alpha ^q = -a \alpha$.\\
        Now, consider $f^*(x) = x a^{-1} f(\frac{1}{x}) = x+a^{-1}$. Then, $f^* \circ \alpha^{-1} = (\alpha^{-1})^q+a^{-1}\alpha ^{-1} = -a^{-1}\alpha^{-1} = 0$. Thus, $\mathbb{F}_q$ order of $\alpha^{-1}$ is $f^*$, and hense it is $(n-1)$-normal.
    \end{proof}
    \begin{corollary}
        Let $\alpha \in \mathbb{F}_{q^2}$. If $\alpha$ is 0-normal then $\alpha^{-1}$ is also 0-normal and if $\alpha$ is 1-normal then $\alpha^{-1}$ is also 1-normal.
    \end{corollary}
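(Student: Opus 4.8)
The plan is to split into the two cases and exploit the small degree $n=2$. The $1$-normal case requires no new work: in $\mathbb{F}_{q^2}$ we have $n-1=1$, so ``$1$-normal'' and ``$(n-1)$-normal'' coincide, and the preceding proposition applies verbatim, giving that $\alpha^{-1}$ is $1$-normal whenever $\alpha$ is. So the entire content lies in the $0$-normal (i.e.\ normal) case.

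For the $0$-normal case, I would argue directly with the conjugate basis. By definition $\alpha$ is normal precisely when its conjugates $\{\alpha,\alpha^{q}\}$ are linearly independent over $\mathbb{F}_q$, and I want the same for $\{\alpha^{-1},(\alpha^{-1})^{q}\}=\{\alpha^{-1},\alpha^{-q}\}$. Starting from a hypothetical relation $c_0\alpha^{-1}+c_1\alpha^{-q}=0$ with $c_0,c_1\in\mathbb{F}_q$, I would multiply through by the norm $\alpha^{q+1}=\alpha\cdot\alpha^{q}$. This clears the inverses and converts the relation into $c_0\alpha^{q}+c_1\alpha=0$; independence of $\{\alpha,\alpha^{q}\}$ then forces $c_0=c_1=0$, so $\{\alpha^{-1},\alpha^{-q}\}$ is independent and $\alpha^{-1}$ is normal.

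An equally short alternative for the $0$-normal case avoids the explicit computation and instead leverages the proposition through a dichotomy. For any $\alpha\in\mathbb{F}_{q^2}^{*}$ the $\mathbb{F}_q$-Order divides $x^{2}-1$ and is nonconstant, so its degree is $1$ or $2$; hence every nonzero element is either $1$-normal or $0$-normal. If $\alpha$ were $0$-normal while $\alpha^{-1}$ were not, then $\alpha^{-1}$ would have to be $1$-normal, and applying the proposition to $\alpha^{-1}$ would make $(\alpha^{-1})^{-1}=\alpha$ be $1$-normal, contradicting the $0$-normality of $\alpha$. This packages the corollary as a clean consequence of the proposition plus the degree dichotomy.

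There is essentially no serious obstacle here; the only points deserving care are bookkeeping ones. In the direct proof one should note that $\alpha^{q+1}$ is a nonzero scalar (indeed it is the norm, lying in $\mathbb{F}_q^{*}$ since $\alpha\ne 0$), so multiplying by it preserves both the equality with $0$ and the $\mathbb{F}_q$-coefficients $c_0,c_1$. In the dichotomy approach the one thing to justify is that a nonzero element cannot have constant $\mathbb{F}_q$-Order, which is immediate because $\operatorname{Ord}(\alpha)$ of degree $0$ would force $\alpha=0$.
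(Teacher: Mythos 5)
Your proposal is correct. The paper actually gives no proof of this corollary at all---it is stated bare, immediately after the proposition on $(n{-}1)$-normal inverses, so the intended justification is precisely what you supply: with $n=2$ the $1$-normal case is the proposition verbatim, and the $0$-normal case follows from the dichotomy that a nonzero element of $\mathbb{F}_{q^2}$ has $\mathbb{F}_q$-Order a nonconstant divisor of $x^2-1$, hence is either $0$-normal or $1$-normal, so $0$-normality of $\alpha^{-1}$ follows by contradiction via the proposition applied to $\alpha^{-1}$. Your second argument is exactly this implicit route, and your careful notes (that $\operatorname{Ord}(\alpha)$ constant forces $\alpha=0$, and that an element cannot be both $0$- and $1$-normal) are the points the paper leaves unsaid. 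Your first argument---clearing denominators in $c_0\alpha^{-1}+c_1\alpha^{-q}=0$ by multiplying by $\alpha^{q+1}$---is a correct, self-contained alternative for the $0$-normal case; it has the modest advantage of not invoking the proposition (or the $\mathbb{F}_q$-Order formalism) at all, at the cost of being special to the conjugate-basis formulation, whereas the dichotomy argument is the one that generalizes in the spirit of the paper's remark that inversion preserves $k$-normality only for quadratic extensions or $k=n-1$.
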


    Theorem~\ref{Ordmul} provides a property concerning the product of \( r \)-primitive elements. In the context of \( \mathbb{F}_q \)-Order, we consider the sum over elements whose \( \mathbb{F}_q \)-Orders are relatively prime.

    \begin{proposition}
        Let $\alpha, \beta \in \mathbb{F}_{q^n}$ with Ord$(\alpha) = f$, Ord$(\beta) = g$ and gcd$(f,g)$ = 1. Then Ord$(\alpha + \beta) = fg$.
    \end{proposition}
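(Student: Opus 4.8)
The plan is to work inside the $\mathbb{F}_q[x]$-module structure on $\mathbb{F}_{q^n}$ in which $x$ acts as the Frobenius $\sigma_q$, so that $h \circ \alpha$ stands for $h(\sigma_q)(\alpha)$. With this reading $\operatorname{Ord}(\alpha)$ is exactly the monic generator of the annihilator ideal of $\alpha$; equivalently, as recorded in the proof of Lemma~\ref{degmin}, the set of annihilating polynomials is the principal ideal $(\operatorname{Ord}(\alpha))$, so $h \circ \alpha = 0$ if and only if $\operatorname{Ord}(\alpha) \mid h$. Throughout I would use two structural properties of this action: it is additive, $h \circ (\alpha+\beta) = h\circ\alpha + h\circ\beta$, because $\sigma_q$ is additive and the coefficients lie in $\mathbb{F}_q$; and it is multiplicative on the operator side, $(h_1 h_2)\circ\gamma = h_1\circ(h_2\circ\gamma)$, because $h \mapsto h(\sigma_q)$ is a ring homomorphism into $\operatorname{End}_{\mathbb{F}_q}(\mathbb{F}_{q^n})$. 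The statement is then precisely the module analogue of the coprime-order identity $\operatorname{ord}(ab)=\operatorname{ord}(a)\operatorname{ord}(b)$ exploited in Proposition~\ref{Ordmul}.

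First I would dispose of the easy divisibility $\operatorname{Ord}(\alpha+\beta)\mid fg$. Writing $fg=gf$ and using $f\circ\alpha=0$ and $g\circ\beta=0$, I compute
\[
(fg)\circ(\alpha+\beta) = g\circ(f\circ\alpha) + f\circ(g\circ\beta) = g\circ 0 + f\circ 0 = 0,
\]
so $fg$ annihilates $\alpha+\beta$ and hence $\operatorname{Ord}(\alpha+\beta)\mid fg$. Note this direction uses neither coprimality nor minimality beyond the annihilator characterization.

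For the reverse divisibility I would set $d=\operatorname{Ord}(\alpha+\beta)$ and introduce $\gamma:=d\circ\alpha=-(d\circ\beta)$, the second equality coming from $d\circ(\alpha+\beta)=0$. The key observation is that $\gamma$ is killed by both $f$ and $g$: indeed
\[
f\circ\gamma = (fd)\circ\alpha = d\circ(f\circ\alpha) = 0,
\qquad
g\circ\gamma = -(gd)\circ\beta = -\,d\circ(g\circ\beta) = 0.
\]
Since $\gcd(f,g)=1$ in the principal ideal domain $\mathbb{F}_q[x]$, the annihilator ideal of $\gamma$ contains $(f)+(g)=(1)$, which forces $\gamma=0$. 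Hence $d\circ\alpha=0$ and $d\circ\beta=0$, giving $f\mid d$ and $g\mid d$; coprimality of $f$ and $g$ then yields $fg\mid d$. Combining with the first step and the fact that both polynomials are monic gives $d=fg$.

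The only genuine obstacle is this reverse inequality, and it is exactly where the hypothesis $\gcd(f,g)=1$ is indispensable: the argument turns on the elementary but essential fact that an element annihilated by two coprime polynomials is annihilated by their gcd, which here is a unit. Everything else is bookkeeping with the additive and multiplicative properties of the Frobenius action, so I would state those two properties explicitly at the outset to keep the central computation transparent and to make clear that the result is the additive, module-theoretic counterpart of Proposition~\ref{Ordmul}.
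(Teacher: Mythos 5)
Your proof is correct and takes essentially the same approach as the paper's: both directions match, with $\operatorname{Ord}(\alpha+\beta)\mid fg$ obtained by showing $fg$ annihilates the sum, and the reverse divisibility obtained by applying $f$ and $g$ to the relation $d\circ\alpha = -(d\circ\beta)$ and invoking coprimality. The only cosmetic difference is that you kill the common element $\gamma = d\circ\alpha$ outright via a Bezout argument and then read off $f\mid d$ and $g\mid d$, whereas the paper keeps the element implicit and cancels $f$ from the divisibility $g\mid fk$; the underlying computation is identical.
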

        \begin{proof}
        Let $k=$Ord$(\alpha+\beta)$ and $f(x)=a_0+a_1x+a_2x^2+ \dots + a_mx^m, g(x) = b_0+b_1x+\dots+b_lx^l$. \\
        Consider, 
        \begin{align*}
            fg \circ (\alpha + \beta) &= f \circ (b_0(\alpha+\beta)+b_1(\alpha+\beta)^{q} + b_2(\alpha+\beta)^{q^2} + \dots + b_l(\alpha+\beta)^{q^l})\\
            &= f \circ ((b_0 \alpha + b_1 \alpha^q + \dots + b_l \alpha^{q^l}) + (b_0 \beta + b_1 \beta^q + \dots + b_l \beta^{q^l}))\\
            &= f \circ ((b_0 \alpha + b_1 \alpha^q + \dots + b_l \alpha^{q^l})+g\circ \beta)\\
            &= a_0(b_0 \alpha + b_1 \alpha^q + \dots + b_l \alpha^{q^l})+a_1(b_0 \alpha + b_1 \alpha^q + \dots + b_l \alpha^{q^l})^q+\dots \\
            & ~ ~ + a_m(b_0 \alpha + b_1 \alpha^q + \dots + b_l \alpha^{q^l})^{q^m}\\
            &= b_0(a_0\alpha + a_1 \alpha^q+\dots a_m \alpha ^{q^m}) + b_1(a_0\alpha^q+ a_1 \alpha^{q^2} + \dots a_m \alpha ^{q^{m+1}})+\dots \\
            & ~ ~ + b_n(a_0\alpha^{q^m}+a_1\alpha^{q^{m+1}}+\dots a_m \alpha^{q^{m+l}})\\
            &= b_0(f \circ \alpha)+b_1(f \circ \alpha ^q)+ \dots +b_n(f \circ \alpha^{q^l})\\
            &= 0.
        \end{align*}
        Thus $k \mid fg$.
        Now, $k \circ ( \alpha + \beta ) = k \circ \alpha + k \circ \beta = 0$, which implies $k \circ \alpha = - k \circ \beta = k \circ (-\beta)$.\\
        Then, $f \circ (k \circ \alpha) = f \circ (k \circ (-\beta))$, which implies $fk \circ \beta = 0$.
        Thus $ g  \mid  fk$, which implies $ g  \mid k$.\\
        Similarly, $ f  \mid  k$. Thus $ fg  \mid  k$.\\
        Therefore $k=fg$.
    \end{proof}
    \begin{corollary}
        Let $\alpha, \beta \in \mathbb{F}_{q^n}$ with Ord$(\alpha) = f$, Ord$(\beta) = g$ and gcd$(f,g)$ = 1. Let $\alpha$ be $k_1$-normal and $\beta$ be $k_2$-normal. Then $\alpha + \beta$ is $k_1+k_2-n$-normal.
    \end{corollary}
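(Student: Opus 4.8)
The plan is to reduce the statement to a degree computation, since $k$-normality is by definition a statement about $\deg(\operatorname{Ord}(\cdot))$. Recall that an element $\gamma \in \mathbb{F}_{q^n}$ is $k$-normal precisely when $\deg(\operatorname{Ord}(\gamma)) = n - k$. The hypotheses therefore translate into the two degree equations $\deg(f) = n - k_1$ and $\deg(g) = n - k_2$, so the whole corollary amounts to computing $\deg(\operatorname{Ord}(\alpha+\beta))$ and reading off the corresponding normality index.

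First I would invoke the preceding proposition: since $\operatorname{Ord}(\alpha) = f$, $\operatorname{Ord}(\beta) = g$, and $\gcd(f,g) = 1$, it gives at once that $\operatorname{Ord}(\alpha+\beta) = fg$. Because $f$ and $g$ are coprime, $\deg(fg) = \deg(f) + \deg(g) = (n - k_1) + (n - k_2) = 2n - k_1 - k_2$. Writing $\alpha + \beta$ as a $k$-normal element, so that $\deg(\operatorname{Ord}(\alpha+\beta)) = n - k$, and comparing the two expressions for the degree yields $n - k = 2n - k_1 - k_2$, hence $k = k_1 + k_2 - n$, which is exactly the claim.

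There is essentially no substantive obstacle here: the entire weight of the argument is carried by the prior proposition establishing $\operatorname{Ord}(\alpha+\beta) = fg$, after which only elementary arithmetic remains. The one point worth recording for well-posedness is that $k = k_1 + k_2 - n$ should be a legitimate (nonnegative) normality index. This is automatic: both $f$ and $g$ divide $x^n - 1$, and since they are coprime their product $fg$ divides $x^n - 1$ as well, forcing $\deg(fg) = 2n - k_1 - k_2 \leq n$, i.e.\ $k_1 + k_2 \geq n$. Thus $k \geq 0$, confirming that $\alpha + \beta$ genuinely is $(k_1+k_2-n)$-normal.
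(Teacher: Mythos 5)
Your proof is correct and follows exactly the route the paper intends: the corollary is stated without proof immediately after the proposition $\operatorname{Ord}(\alpha+\beta)=fg$, and the intended justification is precisely your degree computation $\deg(fg) = (n-k_1)+(n-k_2) = n-(k_1+k_2-n)$. Your added observation that $fg \mid x^n-1$ (by coprimality) guarantees $k_1+k_2-n \geq 0$ is a worthwhile well-posedness check that the paper omits.
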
 
    \begin{corollary}
        Let $\alpha_1, \alpha_2, \dots, \alpha_k \in \mathbb{F}_{q^n}$, with $Ord(\alpha_i)=g_i, ~\forall~i, 1 \leq i \leq k$ and $g_1,g_2,\dots, g_k$ be pairwise relatively prime polynomials over $\mathbb{F}_q[x]$. Then $Ord(\alpha_1+ \alpha_2+\dots +\alpha_k) = lcm\{g_1,g_2, \dots , g_k\}$.
    \end{corollary}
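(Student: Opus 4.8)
The plan is to argue by induction on the number $k$ of summands, using the preceding proposition — which settles the case of two elements whose $\mathbb{F}_q$-Orders are coprime — as the engine of the inductive step. The base case $k = 1$ is immediate, since $Ord(\alpha_1) = g_1 = lcm\{g_1\}$, and the case $k = 2$ is precisely the preceding proposition, on noting that for coprime $f, g$ one has $fg = lcm\{f, g\}$.

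For the inductive step, I assume the claim for any collection of $k-1$ elements with pairwise relatively prime $\mathbb{F}_q$-Orders, and set $\beta = \alpha_1 + \alpha_2 + \dots + \alpha_{k-1}$. The induction hypothesis gives $Ord(\beta) = lcm\{g_1, \dots, g_{k-1}\}$, and since $g_1, \dots, g_{k-1}$ are pairwise coprime in the unique factorization domain $\mathbb{F}_q[x]$, this least common multiple is the product $g_1 g_2 \cdots g_{k-1}$. I then apply the preceding proposition to the pair $\beta$ and $\alpha_k$ to obtain
\[
Ord(\alpha_1 + \dots + \alpha_k) = Ord(\beta + \alpha_k) = Ord(\beta)\, g_k = g_1 g_2 \cdots g_k = lcm\{g_1, \dots, g_k\},
\]
which closes the induction.

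The only point demanding care — and the nearest thing to an obstacle — is verifying the hypothesis $gcd(Ord(\beta), g_k) = 1$ needed to invoke the proposition at each stage. This holds because $g_k$ is relatively prime to each $g_i$ for $i < k$, so in $\mathbb{F}_q[x]$ it shares no irreducible factor with the product $g_1 \cdots g_{k-1} = Ord(\beta)$; hence the two are coprime. The same pairwise-coprimality fact is what permits replacing $lcm\{g_1, \dots, g_{k-1}\}$ by the product throughout, so a single structural observation about $\mathbb{F}_q[x]$ simultaneously supplies the coprimality required to apply the proposition and the identification of the lcm with the product. One could alternatively bypass the induction and argue directly, mirroring the computation in the proof of the two-summand case, but the inductive route is cleaner and reuses the established proposition verbatim.
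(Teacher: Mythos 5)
Your proof is correct and is exactly the argument the paper intends: the corollary is stated there without proof as an immediate consequence of the preceding proposition, and your induction --- using that pairwise coprimality in $\mathbb{F}_q[x]$ makes $g_k$ coprime to the product $g_1\cdots g_{k-1} = lcm\{g_1,\dots,g_{k-1}\}$ --- is the natural way to fill it in. No gaps; nothing further is needed.
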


    We observe that there are $\frac{q^n-1}{q-1}$ elements of the same norm and $p^{n-1}$ elements of the same trace in a field $\mathbb{F}_{q^n}$. This fact is used to derive the following result.
    \begin{lemma}
        Let $\alpha \in \mathbb{F}_{q^n}$ with $\mathbb{F}_q$-Order $F$ with $(x-1) \nmid F$. Then $trace(\alpha) = 0$.
    \end{lemma}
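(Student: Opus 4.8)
The plan is to realize the trace as the evaluation of a single polynomial in the Frobenius $\sigma_q$ acting on $\alpha$, and then to convert the vanishing of the trace into a divisibility condition on $F = Ord(\alpha)$. Recall that
\[
trace(\alpha) = \alpha + \alpha^q + \cdots + \alpha^{q^{n-1}} = T(\sigma_q)(\alpha),
\qquad T(x) = 1 + x + \cdots + x^{n-1}.
\]
Since $x^n - 1 = (x-1)\,T(x)$ in $\mathbb{F}_q[x]$, we have $T = (x^n-1)/(x-1)$, and proving $trace(\alpha) = 0$ is the same as proving $T(\sigma_q)(\alpha) = 0$.

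First I would invoke the annihilator characterization of the $\mathbb{F}_q$-Order of an element that is already used in the proof of Lemma~\ref{degmin}: for any $g \in \mathbb{F}_q[x]$ one has $g(\sigma_q)(\alpha) = 0$ if and only if $Ord(\alpha) \mid g$. Equivalently, the kernel of the evaluation map $g \mapsto g(\sigma_q)(\alpha)$ is precisely the ideal generated by $F$. In particular, the identity $trace(\alpha) = 0$ is equivalent to the single divisibility statement $F \mid T$.

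Next I would establish $F \mid T$ from the hypothesis $(x-1)\nmid F$. The $\mathbb{F}_q$-Order always divides $x^n-1$, so $F \mid (x-1)\,T(x)$. Because $x-1$ is irreducible in $\mathbb{F}_q[x]$ and, by assumption, does not divide $F$, we have $\gcd(F,\,x-1)=1$; coprimality combined with $F \mid (x-1)\,T(x)$ forces $F \mid T(x)$. Applying the characterization of the previous step then gives $T(\sigma_q)(\alpha) = trace(\alpha) = 0$, which is the desired conclusion.

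I do not expect a genuine obstacle here. The only point that requires care is stating cleanly the annihilator-ideal property of $Ord(\alpha)$ (namely $g(\sigma_q)(\alpha)=0 \iff Ord(\alpha)\mid g$), which is implicit in the excerpt, and then using the coprimality of $F$ with the degree-one factor $x-1$. The remaining manipulations are purely formal.
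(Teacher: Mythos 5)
Your proof is correct and follows essentially the same route as the paper: both factor $x^n-1=(x-1)(1+x+\dots+x^{n-1})$, use $(x-1)\nmid F$ to conclude $F \mid 1+x+\dots+x^{n-1}$, and then invoke the annihilator property of the $\mathbb{F}_q$-Order together with the identification of the trace as $(1+x+\dots+x^{n-1})(\sigma_q)(\alpha)$. The only difference is that you spell out explicitly the coprimality step and the ideal characterization $g(\sigma_q)(\alpha)=0 \iff F \mid g$, which the paper leaves implicit.
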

    \begin{proof}
        $x^n-1=(x-1)(1+x+x^2+\dots+x^{n-1})$. Since, $(x-1) \nmid F, ~F \mid (1+x+x^2+\dots+x^{n-1}) = f$. Thus, by definition of $\mathbb{F}_q$-Order and trace, $trace(\alpha) = 0$.  
    \end{proof}
    \begin{proposition}
        $\displaystyle\sum_{\substack{F \mid x^n-1\\ (x-1)\nmid F}} \phi(F) = p^{n-1}$ with $gcd(n,p)=1$.
    \end{proposition}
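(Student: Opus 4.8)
The plan is to reduce the sum to the polynomial Gauss identity $\sum_{g\mid f}\phi(g)=q^{\deg f}$ established above. First I would record the factorisation $x^{n}-1=(x-1)\Phi$ with $\Phi=1+x+\cdots+x^{n-1}$. Because $\gcd(n,p)=1$, the formal derivative $nx^{n-1}$ is nonzero and shares no root with $x^{n}-1$, so $x^{n}-1$ is separable; in particular the irreducible factor $x-1$ occurs to the first power, whence $(x-1)\nmid\Phi$ and $\gcd(x-1,\Phi)=1$. The key combinatorial observation is then that the divisors $F$ of $x^{n}-1$ with $(x-1)\nmid F$ are \emph{exactly} the divisors of $\Phi$: writing any divisor as $(x-1)^{\varepsilon}d$ with $\varepsilon\in\{0,1\}$ and $d\mid\Phi$ (possible since $x-1$ is irreducible and appears only once), the condition $(x-1)\nmid F$ forces $\varepsilon=0$, i.e. $F\mid\Phi$. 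Consequently
$$\sum_{\substack{F\mid x^{n}-1\\ (x-1)\nmid F}}\phi(F)=\sum_{F\mid\Phi}\phi(F)=q^{\deg\Phi}=q^{n-1},$$
applying the identity $\sum_{g\mid f}\phi(g)=q^{\deg f}$ with $f=\Phi$.

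A complementary route, which explains the appearance of the trace in the preceding lemma, runs as follows. Since $\operatorname{trace}(\alpha)=\alpha+\alpha^{q}+\cdots+\alpha^{q^{n-1}}=\Phi\circ\alpha$, the defining property of the $\mathbb{F}_{q}$-Order of an element (the annihilator of $\alpha$ under $\sigma_q$ is generated by $\operatorname{Ord}(\alpha)$, cf. Lemma~\ref{degmin}) gives $\operatorname{trace}(\alpha)=0$ if and only if $\operatorname{Ord}(\alpha)\mid\Phi$, which by the separability above is equivalent to $(x-1)\nmid\operatorname{Ord}(\alpha)$. This upgrades the one-directional lemma to a genuine equivalence, so that summing $\phi(F)$ over the admissible $F$ counts precisely the trace-$0$ elements, partitioned by their $\mathbb{F}_{q}$-Order. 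Since the trace is a surjective $\mathbb{F}_{q}$-linear form, its kernel has $q^{n-1}$ elements, recovering the same value and matching the count quoted in the preceding remark.

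The one point demanding care is the hypothesis $\gcd(n,p)=1$: it is exactly what guarantees separability and hence $(x-1)\nmid\Phi$, without which the clean identification of the index set with the divisors of $\Phi$ would fail (an extra factor of $x-1$ could otherwise appear). I expect the only other subtlety to be bookkeeping: the argument naturally delivers $q^{\deg\Phi}=q^{n-1}$, which coincides with the stated $p^{n-1}$ precisely in the prime-field case $q=p$, so I would either read the statement over $\mathbb{F}_{p}$ or record the general value as $q^{n-1}$. Beyond this, no serious obstacle remains, the whole proof being a one-line specialisation of the polynomial totient identity once the separability input is in place.
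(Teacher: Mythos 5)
Your proposal is correct, and your primary argument is genuinely different from the paper's. The paper proves the identity by counting \emph{elements}: it combines the preceding lemma (if $(x-1)\nmid F$ then $\operatorname{trace}(\alpha)=0$) with an in-proof ``Claim'' establishing the converse (if $(x-1)\mid F$ then $\operatorname{trace}(\alpha)\neq 0$, using $\gcd(n,p)=1$ to get $(x-1)\nmid 1+x+\cdots+x^{n-1}$), and then identifies the left-hand sum with the number of trace-zero elements of $\mathbb{F}_{p^n}$, namely $p^{n-1}$. This is exactly your ``complementary route,'' including the point you flag: the lemma as stated is one-directional and must be upgraded to an equivalence before the count goes through, which the paper does implicitly via its Claim. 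Your primary route instead works entirely at the level of divisors: separability of $x^n-1$ (here is where $\gcd(n,p)=1$ enters) identifies the index set $\{F \mid x^n-1,\ (x-1)\nmid F\}$ with the divisors of $\Phi=1+x+\cdots+x^{n-1}$, and the polynomial totient identity $\sum_{g\mid f}\phi(g)=q^{\deg f}$ (recorded in the paper just before this proposition) finishes in one line. That version is shorter, avoids any reference to elements or to the trace, and makes the role of the hypothesis $\gcd(n,p)=1$ completely transparent; the paper's version, by contrast, explains \emph{why} the identity is true combinatorially---it counts trace-zero elements partitioned by their $\mathbb{F}_q$-Order---and ties it to the surrounding material on traces. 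You are also right about the $p$ versus $q$ issue: both arguments naturally produce $q^{n-1}$, and the paper's own proof silently specializes to $\mathbb{F}_{p^n}$ (its remark even asserts ``$p^{n-1}$ elements of the same trace in $\mathbb{F}_{q^n}$,'' which is accurate only for $q=p$), so your decision to either read the statement over $\mathbb{F}_p$ or restate the result as $q^{n-1}$ is more careful than the source.
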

    \begin{proof}
        Let $\alpha$ be the element with $\mathbb{F}_q$-Order $F$.\\
        Claim: If $(x-1) \mid F$, then $trace(\alpha) \neq 0$.\\
        Suppose $trace(\alpha)=0$, then $F \mid 1+x+\dots + x^{n-1}$. Since $(x-1) \mid F$, we get $(x-1) \mid 1+x+\dots + x^{n-1}$, a contradiction.($\because gcd(n,p)=1$).\\
        By previous proposition we have if $(x-1) \nmid F$, then $trace(\alpha) =0$. Also there are $p^{n-1}$ elements with same trace in a field $\mathbb{F}_{p^n}$.\\
        Thus, $\displaystyle\sum_{\substack{F \mid x^n-1\\ (x-1)\nmid F}} \phi(F) = p^{n-1}$. 
     \end{proof}

\section{Conclusions}\label{sec5}
This paper explored the theory of $r$-primitive and $k$-normal elements over finite fields and extended these ideas to the polynomial setting through the notion of the $\mathbb{F}_q$-Order. By relating the
$\mathbb{F}_q$-Order of an irreducible polynomial to that of its roots, we reinforced the close connection between element-based and polynomial-based approaches. This perspective presents a unified and systematic framework for studying irreducible polynomials with specified $\mathbb{F}_q$-order, resulting in explicit enumeration outcomes. It produces precise counting formulas for $k$-normal polynomials with a fixed degree. Additionally, order-based and $\mathbb{F}_q$-Order based partitions were analyzed, yielding alternative derivations of classical identities
associated with the Euler phi function and its polynomial counterpart. We also discussed some algebraic and combinatorial properties of
$r$-primitive and $k$-normal elements.

\end{document}